\newtheorem{thm}{Theorem}[section]
\newtheorem{lem}[thm]{Lemma}
\newtheorem{prop}[thm]{Proposition}
\newtheorem{rem}{Remark}[section]
\def\build#1_#2^#3{\mathrel{\mathop{\kern 0pt#1}\limits_{#2}^{#3}}}
\numberwithin{equation}{section}
\begin{document}

\title[MDP for some estimators of the  realised (co-)volatility]{Moderate deviations for  bipower variation of general function and Hayashi-Yoshida estimators}

\author{Hac\`ene Djellout}
\email{Hacene.Djellout@math.univ-bpclermont.fr}
\address{Laboratoire de Math\'ematiques, CNRS UMR 6620, Universit\'e Blaise Pascal, Campus universitaire des C\'ezeaux, 3 place Vasarely, TSA 60026, CS 60026, 63178 AUBI\`ERE CEDEX, FRANCE.} 

\author{Arnaud Guillin}
\email{Arnaud.Guillin@math.univ-bpclermont.fr}
\address{Laboratoire de Math\'ematiques, CNRS UMR 6620, Universit\'e Blaise Pascal, Campus universitaire des C\'ezeaux, 3 place Vasarely, TSA 60026, CS 60026, 63178 AUBI\`ERE CEDEX, FRANCE.} 

\author{Hui  Jiang}
\email{huijiang@nuaa.edu.cn}
\address{Department of Mathematics, Nanjing University of Aeronautics and Astronautics,
29 Yudao Street, Nanjing 210016, China.}

\author{Yacouba Samoura}
\email{Yacouba.Samoura@math.univ-bpclermont.fr}
\address{Laboratoire de Math\'ematiques, CNRS UMR 6620, Universit\'e Blaise Pascal, Campus universitaire des C\'ezeaux, 3 place Vasarely, TSA 60026, CS 60026, 63178 AUBI\`ERE CEDEX, FRANCE.}

\keywords{Moderate deviation principle, nonsynchronicity, M-dependent, Diffusion, Discrete-time observation, Quadratic variation, Volatility, Bipower variation.}

\date{\today}
\begin{abstract}  We consider the moderate deviations behaviors for two (co-) volatility estimators: generalised bipower variation,  Hayashi-Yoshida estimator. The  results are obtained by using a new result about the moderate deviations principle for $m$-dependent random variables based on the Chen-Ledoux type condition. 

\end{abstract}

\maketitle

\vspace{-0.5cm}

\begin{center}
\textit{AMS 2000 subject classifications: 60F10, 62J05,  60J05.}
\end{center}

\medskip

\section{Motivation and context }

The principle of moderate deviations (PDM, in short) is a subject of classic study of the probability theory. Indeed, in the study of the limit theorems  of a probability or statistical model, the PDM is one of main questions that we look, after the laws of large numbers, the central limit theorem (CLT, in short)  and the law of the iterated logarithm.

\medskip
The MDP can be seen as an intermediate behavior between the CLT and large
deviations principle (LDP, in short). Usually, the MDP exhibit a simpler rate function (quadratic) inherited from the approximated Gaussian process, and holds for a larger class of dependent random variables than the LDP. 

\medskip

The LDP and  MDP of sums of random variables is now a wide and fastly growing branch of probability theory. It was created initially in the framework of the theory of sums of independent identically distributed random variables and then extended to a wide class of random processes, i.e., random functions in one variable, with some general conditions of weak dependence traditional for the theory of random processes.

\medskip

It is best to think of a specific example to clarify the idea.
\medskip

Consider, for example, an independent and identically distributed  (i.i.d.) sequence $(Z_i)_{i\ge 1}$ of $\mathbb R^d$-valued zero mean random variable with common probability law. A LDP for $S_n=\sum_{i=1}^n Z_i$ will formally say that for $\delta>0$,
$$\mathbb P(|S_n|>n\delta) \approx \exp\{-n\inf\{I(z):|z|\ge \delta\}\},$$
where for $\displaystyle z\in \mathbb R^d, I(z)=\sup_{\alpha\in\mathbb R^d}\big\{\langle\alpha,z\rangle-\log\mathbb E\exp\langle\alpha,Z\rangle\big\}$. Now let $a_n$ be a positive sequence such that $a_n\rightarrow \infty$ and $n^{-1/2}a_n\rightarrow 0$ as $n\rightarrow \infty$. Then the MDP for $S_n $ will say that
$$\mathbb P(|S_n|>n^{1/2}a_n\delta) \approx \exp\{-a^2_n\inf\{I^0(z):|z|\ge \delta\}\},$$
where $I^0(z)=\frac{1}{2}\langle z,{\rm Cov(Z)}^{-1}z \rangle$. Thus the MDP gives estimates on probabilities of deviations of order $n^{1/2}a_n$, which is of lower order than $n$ and with a rate function that is quadratic form.
\medskip

The purpose of this paper is to investigate MDP for both the Hayashi-Yoshida  estimator  as well as for the generalised bipower estimator. These two statistics have been widely studied, both theoretically
and empirically.
\medskip

We consider $X_t=(X_{1,t},X_{2,t})_{t\in [0,T]}$ a 2-dimensional semimartingale, defined on the filtred probability space $(\Omega ,\mathcal F,(\mathcal F_t)_{[0,T]},\mathbb P)$, of the form
\begin{equation}\label{bi-equation}
\left\{\aligned
dX_{1,t}=b_{1} (t,X_t)dt +\sigma_{1,t}dW_{1,t}\\
dX_{2,t}=b_{2}(t,X_t) dt+\sigma_{2,t} dW_{2,t}
\endaligned
\right.
\end{equation}
 where  $W_1=(W_{1,t})_{t\in [0,T]}$ and $W_2=(W_{2,t})_{t\in [0,T]}$ are two correlated Wiener processes
 with $\rho_t={\rm Cov}(W_{1,t},W_{2,t})$, $t\in[0,T]$. Moreover, $\rho_{\cdot}\in [0,1]$
 and $\sigma_{\ell,\cdot}, \ell=1,2$ are both unknown deterministic and
 measurable functions of $t$, $b_{\ell}(\cdot,\cdot), \ell=1,2$ are progressively measurable (possibly unknown) functions. 
\vspace{5pt}

Models of the type (\ref{bi-equation}) and their extensions are widely used in mathematical finance to capture the dynamics of stock prices or interest rates.

\medskip

To provide analysis of MDP in our context, we prove a new result about MDP for $m$-dependent random variables using the Chen-Ledoux type condition. This condition links the speed of the MDP to the queue of the distribution of the random variables.

\medskip

It known that the Chen-Ledoux type condition is a necessary and sufficient condition for the independent and identically distributed random variables, see \cite{EL}. Djellout \cite{DH} using a similar condition has obtained the MDP for martingale difference sequence. Djellout and Guillin \cite{DG} have also obtained the MDP for Markov chain using this conditions. See also the work of Bitseki Penda, Djellout and Proia \cite{BDP} for the MDP of the Durbin Watson statistics.

\medskip

We start with the definition of the LDP.
\medskip

We say that a sequence of random variables $(Z_n)$ with topological state space $\mathcal Z$ satisfies a LDP with speed $\alpha_n$ and good rate function $I(\cdot):\mathcal Z\longrightarrow \mathbb R^+$ if $I$ is lower semi-continuous with compact level set and for every mesurable set $A$, we have
$$-\inf_{x\in \mathring{A}}I(x)\le \liminf_{n\rightarrow \infty}\frac{1}{\alpha_n}\log\mathbb P(Z_n\in A)\le \limsup_{n\rightarrow \infty}\frac{1}{\alpha_n}\log\mathbb P(Z_n\in A)\le -\inf_{x\in \bar{A}}I(x),$$
where  $\mathring{A}$ and $\bar{A}$ denote the interior and the closure of $A$, respectively.
\medskip

Now we will introduce and explain the construction of the two estimators. We start with the Hayashi-Yoshida estimator and we continue next with the generalised bipower estimator.

\medskip

{\bf Hayashi-Yoshida estimator} In this first part, we focus our attention on the estimation of the co-volatility of $X_1$ and $X_2$
$$
\langle X_1, X_2\rangle_T=\int_0^T\sigma_{1,t}\sigma_{2,t}\rho_tdt.
$$

Given the synchronous observations of the processes $\left(X_{1,t_i},X_{2,t_i}\right)_{i=0,...,n}$ a popular statistic to estimate the co-volatility is
$$
{\mathcal C}_n:=\sum_{i=1}^{n}\Delta X_{1}(I^i)\Delta X_{2}(I^i),
$$
where  $I^i=(t_{i-1},t_i]$  and $\Delta X_{\ell}(I^i)=X_{\ell,t_i}-X_{\ell,t_{i-1}}$. This estimator is often called the realized co-volatility estimator, see  \cite{Andersen}.

\vspace{5pt}
The asymptotic distribution of ${\mathcal C}_n$ was formulated by Barndorff et al. \cite{BN1}.  Djellout and Samoura \cite{DS}, Djellout et al. \cite{DGS} obtained the LDP and MDP for the realized covolatility ${\mathcal C}_n$. For more references, one can see \cite{DGW},\cite{HJ},\cite{KO},\cite{MC} and the references therein.
\vspace{5pt}

However, in financial applications, actual transaction data are recorded at irregular times in a nonsynchronous manner, i.e. two transaction prices are usually not observed at the same time.
This fact requires one who adopt ${\mathcal C}_n$ to synchronize the original data a prior, choose a common interval length $h$ first, then impute missing observations by some interpolation scheme such as previous-tick interpolation of linear interpolation. Unfortunately, those procedures
may result in synchronization bias \cite{HY2}.

\vspace{5pt}

Recently, Hayashi and Yoshida \cite{HY2} postulated a new estimator which is free of synchronization
and hence of any bias due to it. To be explicitly,
for the nonsynchronous observations $\left(X_{1,t_i},X_{2,s_j}\right)_{i=0,...,n}^{j=0,...,m}$ with $0=s_0<s_1<...<s_{m}=T, 0=t_0<t_1<...<t_{n}=T, m, n\in \mathbb N$, the Hayashi-Yoshida estimator is defined as
\begin{equation}\label{H-Y}
U_{n,m}:=\sum_{i=1}^{n}\sum_{j=1}^{m}\Delta X_{1}(I^i)\Delta X_{2}(J^j)I_{\{I^i\cap J^j\neq \emptyset\}},
\end{equation}
where $I^i=(t_{i-1},t_i]$, $J^j=(s_{j-1},s_j]$. Under some assumptions on the equation~(\ref{bi-equation}),
Hayashi and Yoshida \cite{HY2},\cite{HN} showed the consistency and asymptotic
normality of $U_{n,m}$ respectively.

\vspace{15pt}
{\bf Generalised Bipower estimator} Recently, the concept of realised bipower variation has built a non-parametric framework for backing out several variational measures of volatility, which has led to a new development in econometrics. Given the observations of the processes $\left(X_{\ell,t_i}\right)_{i=0,...,n}$
with $0=t_0<t_1<...<t_n=T,n\in \mathbb N$, $\ell=1,2$, the realised Bipower variation which is given by
\begin{equation}\label{rbp1}
V_{\ell,1}^{n}(r,q)=\frac{1}{n}\sum_{i=1}^{n-1}\left|\sqrt{n} \Delta X_{\ell}(I^i)\right|^r\left| \sqrt{n}\Delta X_{\ell}(I^{i+1})\right|^q,
\end{equation}
provides a whole variety of estimators for different integrated powers of volatility. For practical convenience, it is standard to take equal spacing, i.e., $t_i-t_{i-1}=\frac{T}{n}:=h$.
An important special case of the class (\ref{rbp1})  is the realised volatility
\begin{equation}\label{rbp}
\sum_{i=1}^{[nt]}|\Delta X_{\ell}(I^i)|^2,
\end{equation}
which is a consistent estimator of the quadratic variation of $X_{\ell}$, i.e.  $\displaystyle \int_0^t\sigma_{\ell,s}^2ds,$
which is often referred to as integrated volatility in the econometric literature.
\vspace{5pt}

In the last years, Nielsen et al. \cite{BN1},\cite{BN2},\cite{BN3} showed the consistency of $V_{\ell,1}^{n}(r,q)$ and introduced the stable CLT for standardised version. Vetter \cite{VM2} extend the results from Jacod \cite{JJ} to the case of bipower variation and he proved the CLT of the bipower variation for continuous semimartingales. In \cite{BN5} they consider the same problems as here where $X$ have jumps and the CLT has been obtained. It was proved that realized bipower variation can estimate integrated power volatility in stochastic volatility models and moreover, under some conditions, it can be a good measure to integrated variance in the presence of jumps. 
\vspace{5pt}

S. Kinnebrock and M. Podolskij \cite{KP} extended the CLT to the bipower variation of general functions called generalised bipower variation:
\begin{equation}\label{lm}
V_{\ell,1}^{n}(g,h)=\frac 1n \sum_{i=1}^{n}g\left(\sqrt n \Delta X_{\ell}(I^i)\right)h\left(\sqrt n\Delta X_{\ell}(I^{i+1})\right),
\end{equation}
where $g$, $h$ are two maps on $\mathbb R$, taking values in $\mathbb R$ and gived some examples from the litterature to which their theory can be applied.

\vspace{5pt}
We know from \cite{BN1} and \cite{BN2} that if $g$ and $h$ are continuously differentiable with $g$, $h$, $g'$ and $h'$ being of at most polynomial growth 
$$V_{\ell,1}^{n}(g,h)\overset{\mathbb P}{\longrightarrow} V_{\ell,1}(g,h):=\int_0^T\Sigma_{\sigma_{\ell,u}}(g)\Sigma_{\sigma_{\ell,u}}(h)du,$$
where 
\begin{equation}\label{Sigma1}\Sigma_{\sigma}(f):=\mathbb E (f(\sigma Z)),\quad Z\sim \mathcal N (0,1),\quad f {\text{ is a real-valued function }},
\end{equation}
where $\overset{\mathbb P}{\longrightarrow} $ denote the convergence in probability.
\vspace{5pt}

In additional, if $h$ and $g$  are  even, we have the CLT
$${\sqrt n}\left(V_{\ell,1}^{n}(g,h)-V_{\ell,1}(g,h)\right)\rightarrow {\mathcal N}(0,\Sigma_{\ell}(g,h))$$
where 
\begin{equation}\label{Sigma2}\Sigma_{\ell}(g,h):=\int_0^T\left(\Sigma_{\sigma_{\ell,s}}(g^2)\Sigma_{\sigma_{\ell,s}}(h^2)+2\Sigma_{\sigma_{\ell,s}}(g)\Sigma_{\sigma_{\ell,s}}(h)\Sigma_{\sigma_{\ell,s}}(gh)-3\Sigma^2_{\sigma_{\ell,s}}(g)\Sigma^2_{\sigma_{\ell,s}}(h)\right)ds.
\end{equation}

Hence, the establishment of the MDPfor  the previously statistics is the natural continuation following the proof of central limit theorems and the law of large numbers.

\vspace{15pt}
This article is structured as follows. In Section 2 we present the main theoretical results and  we state the proofs in the Section 3. The appendix is devoted to state and prove the main result about the MDP for $m$-dependent random variables.


\section{Main Results}

\subsection{Moderate deviations for the  Hayashi-Yoshida estimator }

Firstly, a reduced design with respect to $(I^i)_{i=1,...,n}$ will be constructed in the following manner.
We collect all $I^i$s such that $I^i\subset J^j$ and combine them into a new interval; if such  $I^i$ does not
exist, do nothing. Then collecting all such
intervals and re-labeling them from left to right yields a new partition of $(0,T]$,
denoted $\left(\hat{I}^{i}\right)_{i=1,...,\hat{n}}$. Due to the bilinearity of $U_{n,m}$ given in (\ref{H-Y}),
\begin{equation}\label{Un-expression}
U_{n,m}=\sum_{i=1}^{\hat{n}}\sum_{j=1}^{m}\Delta X_{1}(\hat{I}^{i})\Delta X_{2}(J^j)K^{\hat{I}}_{ij}
=\sum_{{i}=1}^{\hat{n}}\Delta X_{1}(\hat{I}^{{i}})\Delta X_{2}\big(\underset{j\in \hat{J}({i})}{\cup}  J^j\big),
\end{equation}
where~$\hat{J}({i}):=\left\{1\leq j\leq m: K^{\hat{I}}_{ij}\neq0\right\}$
 with $K^{\hat{I}}_{ij}=I_{\{\hat{I}^i\cap J^j\neq \emptyset\}}$. Then, we can find that each $J^j$
contains at most one $\hat{I}^{i}$, which implies that the random variable sequence
$$
\left\{\Delta X_{1}(\hat{I}^{i})\Delta X_{2}\big(\underset{j\in \hat{J}({i})}{\cup}  J^j\big)\right\}_{1\leq i\leq\hat{n}}
$$
are $2$-dependent.
Now, the number $\hat{n}$ can be formulated as follows. Define
$$
\tau_1=\inf\{1\leq i\leq n: I^i\nsubseteq J^1\},~ \varsigma_1=\sup\{1\leq j\leq m: I^{\tau_1}\cap J^j\neq\emptyset\}
$$
and
$$
\tau_k=\inf\{\tau_{k-1}< i\leq n: I^i\nsubseteq J^{\varsigma_{k-1}}\},~ \varsigma_k=\sup\{\varsigma_{k-1}<j\leq m: I^{\tau_k}\cap J^j\neq\emptyset\},
$$
with~$\tau_0=0,~\varsigma_0=1$ and~$\inf{\emptyset}=+\infty,~\sup{\emptyset}=0$. Let~$n_0=\sup\{k: \tau_k<+\infty\}$.
Then, one can conclude that
\begin{equation}\label{number}
n_0\leq\hat{n}\leq n_0+\sum_{k=1}^{n_0}I_{\left\{\tau_k-\tau_{k-1}>1\right\}}+I_{\left\{\tau_{n_0}<n\right\}}
\leq 2n_0+1.
\end{equation}

Let $A_{\ell,t}=\int_0^tb_{\ell}(s,\omega)ds$, $t\in[0,T]$ and $\ell=1,2$. When the drift
$b_{\ell}(t,\omega)$ is known, we can consider the following estimator
\begin{equation}\label{def-V}
\begin{aligned}
V_{n,m}:&=\sum_{i=1}^{n}\sum_{j=1}^{m}\Delta X_{1}^0(I^i)\Delta X_{2}^0(J^j)I_{\{I^i\cap J^j\neq \emptyset\}}\\
&=\sum_{{i}=1}^{\hat{n}}\Delta X_{1}^0(\hat{I}^{{i}})\Delta X_{2}^0\big(\underset{j\in \hat{J}({i})}{\cup}  J^j\big).
\end{aligned}
\end{equation}
where $X_{\ell,t}^0 = X_{\ell,t} - A_{\ell,t}$.\par

 For any Borel set $I\subset[0,T]$, define
$$
\nu(I):=\int_I\sigma_{1,t}\sigma_{2,t}\rho_tdt,\qquad \nu_{\ell}(I):=\int_I\sigma_{\ell,t}^2dt,~\ell=1,2.
$$
Let~$r_{n,m}:=\max_{1\leq i\leq n}|I^i|\vee\max_{1\leq j\leq m}|J^j|$, the largest interval size. We always
assume that as $n, m\to+\infty$, $r_{n,m}\to0$, which implies that $n_0\to+\infty$.
 For simplicity, let $m=m(n)$ and as $n\to+\infty$,  $m\to+\infty$. Moreover,
write
$$
U_{n,m}=U_n,\quad V_{n,m}=V_n,\quad r_{n,m}=r_n.
$$
To establish the MDP for $V_n$, we introduce the following conditions.
\vspace{5pt}

\noindent \textbf{(C1)} There exist a sequence of positive numbers $(c_n)_{n\geq1}\subset(0,1)$ and some constant
$\Sigma\in(0,+\infty)$ such that, as $n\longrightarrow+\infty$, $c_n\longrightarrow 0$ and
$$
c_n^{-1}\left(\sum_{i=1}^{\hat n}\sum_{j=1}^{m}\nu_1(\hat{I}^i)\nu_2(J^j)K^{\hat{I}}_{ij}+\sum_{i=1}^{\hat n}\nu^2(I^i)
+\sum_{j=1}^{m}\nu^2(J^j)-\sum_{i=1}^{\hat n}\sum_{j=1}^{m}\nu^2(I^i\cap J^j)\right)\longrightarrow \Sigma.
$$

\noindent \textbf{(C2)} Let $(b_n)_{n\geq1}$ a sequence of positive numbers 
 such that, as $n\longrightarrow+\infty$,
 $$
 b_n\longrightarrow+\infty,\quad b_n\sqrt{c_n}\longrightarrow 0\quad{\rm and}\quad  \frac{b_n}{\sqrt{c_n}\log n}\longrightarrow \infty.$$
\begin{thm}\label{mdp-1}
Under conditions {\textbf{(C1)}} and {\textbf{(C2)}}, the sequence
\begin{equation}\label{esti}
\left\{\frac{1}{b_n\sqrt{c_n}}\left(V_n-\int_{0}^{T}\sigma_{1,t}\sigma_{2,t}\rho_tdt\right)\right\}_{n\geq1}
\end{equation}
satisfies the LDP with speed $b^2_n$ and rate
function  $\displaystyle L(x)=\frac{x^2}{2\Sigma}$.\quad$\Box$
\end{thm}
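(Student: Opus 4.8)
The plan is to write $V_n$ as the sum of the $2$-dependent summands exhibited in \eqref{def-V}, namely $V_n=\sum_{i=1}^{\hat n}\xi_i$ with $\xi_i:=\Delta X_1^0(\hat I^i)\,\Delta X_2^0\big(\cup_{j\in\hat J(i)}J^j\big)$, and then to invoke the MDP for $m$-dependent random variables (with $m=2$) established in the Appendix. Because the drift $A_{\ell,t}$ has been subtracted and $\sigma_{\ell,\cdot}$ is deterministic, each increment $\Delta X_\ell^0(I)=\int_I\sigma_{\ell,t}\,dW_{\ell,t}$ is exactly a centered Gaussian variable of variance $\nu_\ell(I)$, with cross-covariance $\dE\big[\Delta X_1^0(I)\Delta X_2^0(J)\big]=\nu(I\cap J)$. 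First I would split
\[
V_n-\int_0^T\sigma_{1,t}\sigma_{2,t}\rho_t\,dt=\big(V_n-\dE V_n\big)+\Big(\dE V_n-\nu([0,T])\Big),
\]
and dispose of the deterministic bias: since $\dE V_n=\sum_i\nu\big(\hat I^i\cap(\cup_{j\in\hat J(i)}J^j)\big)$ differs from $\nu([0,T])$ only through the uncovered boundary intervals, the bias is of order $r_n$, and because $c_n$ is the order of the variance (hence of order $r_n$ under the mesh assumption) while $b_n/\sqrt{c_n}\to\infty$ by \textbf{(C2)}, one gets $(b_n\sqrt{c_n})^{-1}(\dE V_n-\nu([0,T]))\to0$; a vanishing deterministic shift does not affect the LDP.

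The next step is to identify the limiting variance. For jointly Gaussian centered $U,V$ one has $\mathrm{Var}(UV)=\mathrm{Var}(U)\mathrm{Var}(V)+\mathrm{Cov}(U,V)^2$, so the diagonal contribution to $\mathrm{Var}(V_n)$ is $\sum_i\big(\nu_1(\hat I^i)\nu_2(\cup_{j\in\hat J(i)}J^j)+\nu(\hat I^i\cap\cup_{j\in\hat J(i)}J^j)^2\big)$; using the disjointness of the $J^j$ the first part equals $\sum_i\sum_j\nu_1(\hat I^i)\nu_2(J^j)K^{\hat I}_{ij}$, the leading term in \textbf{(C1)}. The off-diagonal covariances $\mathrm{Cov}(\xi_i,\xi_{i+1})$ and $\mathrm{Cov}(\xi_i,\xi_{i+2})$, again computed by Wick's formula, combine with the diagonal cross term and, after a combinatorial rearrangement governed by the reduced design, reproduce $\sum_i\nu^2(I^i)+\sum_j\nu^2(J^j)-\sum_i\sum_j\nu^2(I^i\cap J^j)$. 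Thus \textbf{(C1)} is precisely the statement that $c_n^{-1}\mathrm{Var}(V_n)\to\Sigma$, which fixes the quadratic rate $L(x)=x^2/(2\Sigma)$.

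It then remains to verify the Chen-Ledoux type hypothesis of the Appendix theorem for the rescaled $2$-dependent array $\{\xi_i/(b_n\sqrt{c_n})\}$. Each $\xi_i$ is a product of two jointly Gaussian increments of variance of order $c_n$, hence has sub-exponential tails $\dP(|\xi_i|>t)\le C\exp(-c\,t/c_n)$, so that $\dP(|\xi_i|>\delta b_n\sqrt{c_n})\lesssim\exp(-c\delta\,b_n/\sqrt{c_n})$. The relevant Chen-Ledoux functional then behaves like $b_n^{-2}\log\big(\hat n\,e^{-c\delta b_n/\sqrt{c_n}}\big)\sim b_n^{-2}\log(1/c_n)-c\delta/(b_n\sqrt{c_n})$, and the two requirements of \textbf{(C2)}, $b_n\sqrt{c_n}\to0$ and $b_n/(\sqrt{c_n}\log n)\to\infty$ (with $\log(1/c_n)\asymp\log n$), are exactly what sends this to $-\infty$, i.e.\ renders the large summands superexponentially negligible at speed $b_n^2$. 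With the centering, the variance convergence \textbf{(C1)} and this tail control in hand, the Appendix MDP yields the LDP of \eqref{esti} at speed $b_n^2$ with rate $x^2/(2\Sigma)$.

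The main obstacle I expect is twofold, and both parts live in the nonsynchronous bookkeeping. First, matching the covariance computation to the precise algebraic form of \textbf{(C1)}: one must track how the $2$-dependence covariances across neighbouring reduced intervals, together with the boundary overlaps between the $\hat I^i$ and the blocks $\cup_{j\in\hat J(i)}J^j$, telescope into the clean expression involving $\nu^2(I^i)$, $\nu^2(J^j)$ and $\nu^2(I^i\cap J^j)$ written back in terms of the original partition; this is where the combinatorics of the reduced design and the bounds \eqref{number} must be used with care. Second, one must confirm that the exponential tail/Chebyshev estimates for products of \emph{correlated} Gaussian increments are uniform in $i$ and compatible with the calibration $c_n\asymp r_n$, so that \textbf{(C2)} genuinely controls the Chen-Ledoux functional; in particular the dependence of the moment generating function of $\xi_i$ on the correlation $\rho$ must be handled when bounding it.
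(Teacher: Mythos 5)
Your proposal follows the paper's own route step for step: the $2$-dependent representation $V_n=\sum_{i=1}^{\hat n}\xi_i$, the Wick/Isserlis computation identifying $c_n^{-1}{\rm Var}(V_n)$ with the expression in \textbf{(C1)} (this is exactly the paper's Lemma \ref{lem-variance} together with Remark \ref{rmk-mdp1}), and the conclusion via the $m$-dependent Chen--Ledoux result, Proposition \ref{mdpmdep}. One simplification you missed: there is no bias term at all. Since $\hat I^i\subset\cup_{j\in\hat J(i)}J^j$ and the intervals $\hat I^i$ partition $(0,T]$, one has $\mathbb E\xi_i=\nu(\hat I^i)$ and hence $\mathbb E V_n=\int_0^T\sigma_{1,t}\sigma_{2,t}\rho_t\,dt$ exactly; your $O(r_n)$ estimate, and with it the appeal to the calibration ``$c_n\asymp r_n$'' (which is nowhere assumed in Theorem \ref{mdp-1}; the mesh condition \textbf{(C4)} belongs only to Theorem \ref{mdp-2}), can simply be deleted.

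The genuine gap is in your verification of the Chen--Ledoux condition. Your tail bound $\mathbb P(|\xi_i|>t)\le C\exp(-ct/c_n)$ presumes that each pair of Gaussian factors has variance of order $c_n$, i.e.\ $\nu_1(\hat I^i)\,\nu_2\bigl(\cup_{j\in\hat J(i)}J^j\bigr)\le Kc_n^2$ uniformly in $i$. Conditions \textbf{(C1)}--\textbf{(C2)} do not give this: \textbf{(C1)} controls only the \emph{sum} over $i$ of these products, so per summand one only gets a bound of order $c_n$, i.e.\ a sub-exponential tail scale $\sqrt{c_n}$ rather than $c_n$; feeding that into your computation yields $\log n/b_n^2-c\delta/b_n$, which does not tend to $-\infty$. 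So, as written, the key step fails under the stated hypotheses --- the ``calibration'' you yourself flag as an obstacle is genuinely unavailable. The paper's proof avoids this at exactly this point: it uses only that $\max_i\nu_\ell(\hat I^i)\to0$ (the Gaussian factors have vanishing variances), whence for some \emph{fixed} $\delta>0$ one has $\limsup_{n}\sup_{i}\mathbb E\,e^{\delta|Y_i|}<\infty$, and then a one-line exponential Chebyshev bound gives
$$\frac{1}{b_n^2}\log\Bigl(n\max_{i}\mathbb P\bigl(|Y_i|\ge b_n/\sqrt{c_n}\bigr)\Bigr)\le\frac{1}{b_n\sqrt{c_n}}\Bigl(\frac{\sqrt{c_n}\log n}{b_n}-\delta\Bigr)+o(1)\longrightarrow-\infty,$$
using both $b_n\sqrt{c_n}\to0$ and $b_n/(\sqrt{c_n}\log n)\to\infty$ from \textbf{(C2)} and nothing else. (The threshold there differs from yours --- $b_n/\sqrt{c_n}$ versus $b_n\sqrt{c_n}$ --- but the exponent produced is the same $b_n/\sqrt{c_n}$; the operative difference is that the fixed-$\delta$ exponential-moment device requires no per-summand variance calibration.) Replacing your sub-exponential tail estimate by this argument closes the gap and brings your proof in line with the paper's.
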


\begin{rem} The condition  \textbf{(C1)} is similar to the \textbf{Condition (C2)} in the paper of Hayashi-Yoisda \cite{HY2}. It postulates the asymptotic connection between the observation and the variance-covariance structure of the given processes. The constant $\Sigma$ serves as the asymptotic variance of the proposed estimator. The rescaling factor $c_n^{-1}$ may be interpreted as the 'average number' off the observation times.
\end{rem}

\begin{rem} The condition \textbf{(C2)} is weaker to the condition (1.10) in Theorem 1.3 of the paper of Djellout et al. \cite{DGW}. Indeed in the last paper,  we need additionnel condition which relies the sequence of the MDP $(b_n)_{n\ge 1}$ to the integrated covolatility. This condition can be expressed in the following way in the actual context
\begin{equation}\label{tay}
b_nc_{n}^{-1/2}\alpha_n\longrightarrow 0,
 \end{equation}
 where $\alpha_n=\max_{\ell=1,2}\max_{1\leq i\leq n}\max_{1\leq j\leq m}\left(\nu_{\ell}(I^i)\vee\nu_{\ell}(J^j)\right)$. This condition (\ref{tay})  comes from the method of the proof based on the Taylor expansion of the Laplace transform of the estimator (\ref{esti}).

\end{rem}

\begin{rem}\label{rmk-mdp1}
By the construction of partition $(\hat{I}^i)_{i=1,...,\hat{n}}$, we obtain immediately that
\begin{align*}
&\sum_{i=1}^{\hat n}\sum_{j=1}^{m}\nu_1(\hat{I}^i)\nu_2(J^j)K^{\hat{I}}_{ij}+\sum_{i=1}^{\hat n}\nu^2(\hat{I}^i)
+\sum_{j=1}^{m}\nu^2(J^j)-\sum_{i=1}^{\hat n}\sum_{j=1}^{m}\nu^2(\hat{I}^i\cap J^j)\\
&=\sum_{i=1}^{n}\sum_{j=1}^{m}\nu_1(I^i)\nu_2(J^j)K^{{I}}_{ij}+\sum_{i=1}^{n}\nu^2(I^i)
+\sum_{j=1}^{m}\nu^2(J^j)-\sum_{i=1}^{n}\sum_{j=1}^{m}\nu^2(I^i\cap J^j).
\end{align*}
\end{rem}

\begin{rem}
Similar to $(\hat{I}^i)_{i=1,...,\hat{n}}$, a reduced design with respect to $(J^i)_{i=1,...,m}$
can  also be constructed as follows.
We collect all $J^j$s such that $J^j\subset I^i$ and combine them into a new interval; if such  $J^j$ does not exist, do nothing. Then collecting all such intervals and re-labeling them from left to right yields a new partition of $(0,T]$, denoted $(\hat{J}^{j})_{j=1,...,\tilde{n}}$. Then $V_n$ has the formula,
\begin{equation}\label{Un-expression-0}
V_n=
\sum_{j=1}^{\tilde{n}}\Delta X_{1}^0\big(\underset{i\in \hat{I}(j)}{\cup}I^i\big)\Delta X_{2}^0(\hat{J}^{j}),
\end{equation}
where $\hat{I}(j):=\left\{1\leq i\leq n: K^{\hat{J}}_{ij}\neq\emptyset\right\}$
 with $K^{\hat{J}}_{ij}=I_{\{I^i\cap \hat{J}^j\neq 0\}}$. Letting $\tilde{n}_0=\sup\{k: \varsigma_k>0\}$, then $\tilde{n}_0=n_0-1$ and
$$
n_0\leq\tilde{n}\leq n_0+\sum_{k=1}^{n_0}I_{\left\{\varsigma_k-\varsigma_{k-1}>1\right\}}+1\leq 2n_0+1.
$$
Therefore, we can conclude that the MDP of $V_n$ are independent of the different
partitions $(\bar{I}^i)_{i=1,...,\bar{n}}$ and $(\hat{J}^{j})_{j=1,...,\tilde{n}}$ in (\ref{Un-expression}) and
 (\ref{Un-expression-0}).
 \end{rem}
 \vspace{5pt}

Now we turn to the MDP for $U_n$. We need the following two additional conditions.
\vspace{5pt}

\noindent \textbf{(C3) } For $\ell=1,2$, $b_{\ell}(\cdot,\cdot)\in L^{\infty}\left({\rm d}t\otimes \mathbb {P}\right),$

\noindent \textbf{(C4) } As $n\longrightarrow +\infty$,  $\displaystyle r_n^2/c_n\longrightarrow 0$.
\vspace{5pt}

\begin{thm}\label{mdp-2}
Under conditions {\textbf{(C1)}} through {\textbf{(C4)}}, the sequence
$$
\left\{\frac{1}{b_n\sqrt{c_n}}\left(U_n-\int_{0}^{T}\sigma_{1,t}\sigma_{2,t}\rho_tdt\right)\right\}_{n\geq1}
$$
satisfies the LDP with speed $b^2_n$ and rate function $\displaystyle L(x)=\frac{x^2}{2\Sigma}$. \quad$\Box$
\end{thm}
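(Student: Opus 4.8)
The plan is to deduce the statement for $U_n$ from the moderate deviation principle already established for $V_n$ in Theorem \ref{mdp-1}, by showing that at the moderate-deviation scale the two estimators are exponentially equivalent. Writing
\[
Z_n^U=\frac{1}{b_n\sqrt{c_n}}\Big(U_n-\int_0^T\sigma_{1,t}\sigma_{2,t}\rho_t\,dt\Big),\qquad
Z_n^V=\frac{1}{b_n\sqrt{c_n}}\Big(V_n-\int_0^T\sigma_{1,t}\sigma_{2,t}\rho_t\,dt\Big),
\]
I would verify that for every $\delta>0$,
\[
\limsup_{n\to\infty}\frac{1}{b_n^2}\log\mathbb{P}\big(|Z_n^U-Z_n^V|>\delta\big)=-\infty,
\]
and then invoke the standard exponential-equivalence lemma for large deviations: since $Z_n^V$ obeys the LDP with speed $b_n^2$ and rate $L(x)=x^2/(2\Sigma)$, the same holds for $Z_n^U$. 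The whole argument thus reduces to a superexponential estimate on $U_n-V_n$, a quantity driven entirely by the drift.

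Using $X_{\ell,t}=X_{\ell,t}^0+A_{\ell,t}$ together with the reduced-design representations (\ref{Un-expression}) and (\ref{def-V}), I would expand, with $\bar J^{i}=\cup_{j\in\hat J(i)}J^j$,
\[
U_n-V_n=\sum_{i=1}^{\hat n}\Big(\Delta A_1(\hat I^{i})\,\Delta X_2^0(\bar J^{i})+\Delta X_1^0(\hat I^{i})\,\Delta A_2(\bar J^{i})+\Delta A_1(\hat I^{i})\,\Delta A_2(\bar J^{i})\Big).
\]
Condition \textbf{(C3)} yields the pathwise bounds $|\Delta A_\ell(I)|\le\|b_\ell\|_\infty|I|$, and by construction $|\hat I^{i}|\le r_n$ and $|\bar J^{i}|\le Cr_n$, while $\sum_i|\hat I^{i}|=T$. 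Hence the drift–drift term is dominated deterministically by $C\,r_n\sum_i|\hat I^{i}|\le C'r_n$, so after division by $b_n\sqrt{c_n}$ it is of order $r_n/(b_n\sqrt{c_n})=(1/b_n)\sqrt{r_n^2/c_n}\to0$ by \textbf{(C4)} and $b_n\to\infty$; being uniformly deterministic it is trivially superexponentially negligible.

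The substance lies in the two mixed terms, for instance $R_n:=\sum_{i}\Delta X_1^0(\hat I^{i})\,\Delta A_2(\bar J^{i})$, in which $\Delta X_1^0(\hat I^{i})=\int_{\hat I^{i}}\sigma_{1,t}\,dW_{1,t}$ are Gaussian stochastic increments and the weights satisfy $|\Delta A_2(\bar J^{i})|\le C r_n$. The decisive point is that one must preserve the cancellation coming from the signs of the Gaussian increments: a crude bound by $\sum_i|\Delta X_1^0(\hat I^{i})|\,|\Delta A_2(\bar J^{i})|$ only produces the order $\sqrt{r_n}$, which is too weak, whereas the genuine variance of $R_n$ is of order $r_n^2$, since one factor $|\Delta A_2(\bar J^{i})|\le Cr_n$ can be pulled out and $\sum_i\nu_1(\hat I^{i})=\int_0^T\sigma_{1,t}^2\,dt<\infty$. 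Controlling the exponential moment of $R_n$ at the scale $\lambda b_n/\sqrt{c_n}$ then yields a Gaussian-type tail of order $\exp(-c\,\delta^2 b_n^2 c_n/r_n^2)$, so that dividing the logarithm by $b_n^2$ leaves $-c\,\delta^2\,c_n/r_n^{2}\to-\infty$, which is precisely condition \textbf{(C4)}.

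The main obstacle is that the weights $\Delta A_2(\bar J^{i})$ are random and, since $b_2=b_2(t,X_t)$ depends on the whole path, correlated with the driving Brownian motion $W_1$; thus $R_n$ is not a true Gaussian variable and its exponential moment cannot be read off directly. I would resolve this by passing to the continuous-martingale representation $R_n=\int_0^T\phi_s\,\sigma_{1,s}\,dW_{1,s}$ with $|\phi_s|\le Cr_n$, where $\phi$ fails to be predictable only through the forward overhang of each $\bar J^{i}$ beyond $\hat I^{i}$; by the $2$-dependence of the design this overhang involves only boundary intervals and can be truncated to an adapted integrand at the cost of finitely overlapping remainder terms that carry the same $O(r_n^2)$ bracket bound. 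The exponential inequality for continuous local martingales with deterministically bounded bracket $\langle R\rangle\le C^2r_n^2\int_0^T\sigma_{1,t}^2\,dt$ then delivers the required Gaussian tail. The term containing $\Delta A_1$ is handled symmetrically, and assembling the three estimates gives the exponential equivalence, completing the proof.
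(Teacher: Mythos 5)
Your reduction is exactly the paper's: Theorem \ref{mdp-2} is deduced from Theorem \ref{mdp-1} once the superexponential estimate (\ref{Un-Vn}) is established, and your decomposition of $U_n-V_n$ into a drift--drift term (killed deterministically via \textbf{(C3)}--\textbf{(C4)}) and two mixed drift--martingale terms is also the paper's. The divergence is in how the mixed terms are estimated: the paper uses precisely the crude bound you reject, namely $|U_n-V_n|\le 3Tr_n\max_{\ell}\|b_{\ell}\|_\infty+3r_n\|b_{1}\|_\infty\sum_i|\Delta M_2(\hat I_i)|+3r_n\|b_{2}\|_\infty\sum_i|\Delta M_1(\hat I_i)|$, followed by an exponential Chebyshev inequality whose exponent is controlled by \textbf{(C4)}; it never keeps the signs of the Gaussian increments, so your signed-martingale route is genuinely different from (and more ambitious than) the paper's argument.

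The genuine gap is in your treatment of the anticipating weights, which is the whole difficulty once you insist on keeping the signs. Your claim that, after truncation to an adapted integrand, the remainder consists of ``finitely overlapping terms that carry the same $O(r_n^2)$ bracket bound'' does not hold up. Writing $u_i=\inf\hat I^i$ and $\bar J^i=\cup_{j\in\hat J(i)}J^j$, the remainder is $\sum_i\Delta M_1(\hat I^i)\int_{\bar J^i\cap(u_i,T]}b_2(s,X_s)\,ds$; this is not a local martingale under any regrouping of the sum (the weights anticipate the increments), so it has no bracket, and, worse, its terms need not even be centered: \textbf{(C3)} only requires $b_2$ to be bounded and progressively measurable, so $b_2(s,X_s)$ for $s>u_i$ may be strongly correlated with $\Delta M_1(\hat I^i)$. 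For instance $b_2(s,\omega)=\|b_2\|_\infty\,\mathrm{sign}\bigl(\Delta M_1(\hat I^{i}\cap[0,s])\bigr)$ on $\hat I^i$ is admissible and gives $\mathbb E\bigl[\Delta M_1(\hat I^i)\,\Delta A_2(\bar J^i)\bigr]$ of order $|\hat I^i|^{3/2}$, hence a mean of order $\sum_i|\hat I^i|^{3/2}$ (about $T\sqrt{r_n}$ for regular designs) for the remainder --- far above the $O(r_n)$ fluctuation scale your bracket bound predicts, and incompatible with any tail of the form $\exp(-cx^2/r_n^2)$ around zero. Consequently the exponential inequality for continuous martingales cannot, by itself, deliver the asserted tail $\exp(-c\delta^2 b_n^2c_n/r_n^2)$ for the full mixed term. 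To complete your route you would additionally need (i) a bias estimate showing this mean is $o(b_n\sqrt{c_n})$ (e.g.\ via $\sum_i|\hat I^i|^{3/2}\le(T\sum_i|\hat I^i|^2)^{1/2}$ combined with \textbf{(C1)}), and (ii) a separate concentration argument for the de-biased anticipating part; neither appears in your proposal, and residue-class or overlap considerations do not address the problem, since the obstruction is the nonvanishing conditional mean, not the overlap of the intervals $\bar J^i$.
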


\begin{rem} Suppose synchronous and equidistant sampling, i.e. $I^i\equiv J^i$ and $|I^i|\equiv T/n$.
Then, we can see that
$$
n_0=n,\quad c_n=1/n,\quad \Sigma=\int_0^T\sigma_{1,t}^2\sigma_{2,t}^2\left(1+\rho_t^2\right)dt.
$$
Moreover, if $\sigma_{\ell,\cdot}\in L^{\infty}(dt)$ for $\ell=1,2$, condition {\textbf{(C2)}} equivalent to
\begin{equation}\label{rmk1-eq1}
b_n\longrightarrow+\infty,\quad b_nn^{-1/2}\longrightarrow 0,\quad{\rm as} \quad n\longrightarrow+\infty.
\end{equation}
Under (\ref{rmk1-eq1}), from our Theorem~\ref{mdp-1}, it follows that the sequence
\begin{equation}\label{suite}
\left\{\frac{\sqrt n}{b_n}\left(V_n-\int_{0}^{T}\sigma_{1,t}\sigma_{2,t}\rho_tdt\right)\right\}_{n\geq1}
\end{equation}
satisfies the LDP with speed $b^2_n$ and rate
function
$$
L_0(x)=\frac{x^2}{2\int_0^T\sigma_{1,t}^2\sigma_{2,t}^2\left(1+\rho_t^2\right)dt}.
$$
This result can also be obtained by Theorem 2.5 in Djellout et al. \cite{DGS}.
\vspace{5pt}

In this case, we have $r_n=1/n$, and condition {\textbf{(C4)}}
holds automatically.
\end{rem}

\begin{rem}
 We consider a nonsynchrnous alternating sampling at odd/even times. We now consider the following deterministic, regularly spaced sampling scheme. The diffusion $X_1$ is sampled at 'odd' times, i.e., $t=\frac{2k-1}{n} T, k=1,2,\cdots,n$ while $X_2$ is at 'even' times, $t=\frac{2k}{2n}T$. Hence $X_1$ and $X_2$ are sampled in a nonsynchronous, alternating way.
So we have that the sequence given in (\ref{suite})  satisfies the LDP with speed $b_n^2$ and rate function $\displaystyle L(x)=\frac{x^2}{2\Sigma}$ with 
$$\Sigma=\int_0^T(\sigma_{1,t}\sigma_{2,t})^2(2+\frac{3}{2}\rho_t^2)dt,$$
see \cite{HY3} for the identification of $\Sigma$.
\end{rem}

\subsection{Moderate deviations for bipower variation of general function}

\begin{prop} \label{mdp_bv1}
Let $(b_n)$ be a sequence of positive numbers such that 
\begin{equation} \label{bv_bn1} b_n\longrightarrow \infty\quad{\rm  and}\quad b_n/\sqrt{n}\longrightarrow 0.\end{equation}
Assume that $g$ and $h$ are two evens, continuous differentiable with at most polynomial growth such that
\begin{equation} \label{eq_bv1}
\begin{aligned}
&\limsup_{n\rightarrow \infty}\frac{1}{b_n^2}\log n\max_{j=1}^n\mathbb P\biggl(\biggl|g\left(\sqrt n \Delta X_{\ell}(I^{j})\right)h\left(\sqrt n\Delta X_{\ell}(I^{j+1})\right)\\
&\hspace*{2.2cm}-\mathbb{E}\left(g(\sqrt n \Delta X_{\ell}(I^{i}))\right)\mathbb{E}\left(h(\sqrt n \Delta X_{\ell}(I^{i+1}))\right)\biggr|> b_n\sqrt{n}\biggr)=-\infty.
\end{aligned}
\end{equation}
So, we have that the sequence
\begin{equation}\label{esti2}
\left\{\frac{\sqrt n}{b_n}\left(V_{\ell,1}^{n}(g,h)-V_{\ell,1}(g,h)\right)\right\}_{n\ge 1}\end{equation}
satisfies the LDP with speed $b_n^2$ and rate function $I_{g,h}^{\ell}(x)=x^2/2\Sigma_{\ell}(g,h),$
where $\Sigma_{\ell}(g,h)$ is given in (\ref{Sigma2}).
\end{prop}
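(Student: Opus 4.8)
The plan is to express the rescaled statistic as a normalized sum of a one-dependent triangular array and then to invoke the general moderate deviation principle for $m$-dependent random variables established in the Appendix, whose Chen--Ledoux hypothesis is precisely the assumption (\ref{eq_bv1}). Writing
$$Y_j^n := g\left(\sqrt n \Delta X_{\ell}(I^{j})\right)h\left(\sqrt n\Delta X_{\ell}(I^{j+1})\right),\qquad 1\le j\le n,$$
I would first center and split
$$\frac{\sqrt n}{b_n}\left(V_{\ell,1}^{n}(g,h)-V_{\ell,1}(g,h)\right)=\frac{1}{b_n\sqrt n}\sum_{j=1}^{n}\left(Y_j^n-\dE Y_j^n\right)+\frac{\sqrt n}{b_n}\left(\frac1n\sum_{j=1}^{n}\dE Y_j^n-V_{\ell,1}(g,h)\right),$$
so the argument reduces to a fluctuation term, to which the Appendix theorem will apply, and a deterministic bias term, which must be shown to vanish.

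The first preliminary step is to neutralize the drift. Because $b_{\ell}$ contributes an increment of order $h=T/n$ whereas the diffusion contributes order $\sqrt h$, a first-order Taylor expansion of $g$ and $h$ (legitimate by their $C^{1}$ character and polynomial growth) shows that replacing $\Delta X_{\ell}$ by the pure martingale increment $\Delta X_{\ell}^0=\int_{I^{j}}\sigma_{\ell,s}\,dW_{\ell,s}$ changes each $Y_j^n$ only by a factor of order $n^{-1/2}$, and I would verify that this perturbation is superexponentially negligible at speed $b_n^2$ under (\ref{bv_bn1}). After this reduction the increments $\Delta X_{\ell}^0(I^{j})$ are independent centered Gaussians, so the reduced array is exactly one-dependent (consecutive terms share only the factor indexed by $j+1$) and $\dE Y_j^n$ factorizes, equaling $\Sigma_{\sigma}(g)\Sigma_{\sigma}(h)$ at the local volatility $\sigma=\sigma_{\ell,\cdot}$ in the notation (\ref{Sigma1}). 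Summing and comparing with the Riemann integral defining $V_{\ell,1}(g,h)$ bounds the bias by $O(1/n)$, whence $\tfrac{\sqrt n}{b_n}\cdot O(1/n)\to 0$ and the bias term drops out without affecting the rate function.

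The heart of the proof is the variance. Using one-dependence,
$$\frac1n\operatorname{Var}\left(\sum_{j=1}^{n}Y_j^n\right)=\frac1n\sum_{j=1}^{n}\operatorname{Var}(Y_j^n)+\frac2n\sum_{j=1}^{n-1}\operatorname{Cov}(Y_j^n,Y_{j+1}^n)+o(1),$$
and I would evaluate the Gaussian moments locally: $\operatorname{Var}(Y_j^n)\to\Sigma_{\sigma}(g^2)\Sigma_{\sigma}(h^2)-\Sigma_{\sigma}^2(g)\Sigma_{\sigma}^2(h)$ and $\operatorname{Cov}(Y_j^n,Y_{j+1}^n)\to\Sigma_{\sigma}(g)\Sigma_{\sigma}(h)\Sigma_{\sigma}(gh)-\Sigma_{\sigma}^2(g)\Sigma_{\sigma}^2(h)$, where the evenness of $g$ and $h$ is what kills the odd cross-terms. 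Passing to the Riemann limit at $\sigma=\sigma_{\ell,s}$ reproduces exactly the asymptotic variance $\Sigma_{\ell}(g,h)$ of (\ref{Sigma2}).

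It then remains to apply the Appendix theorem to the centered one-dependent array $\{Y_j^n-\dE Y_j^n\}$: its hypotheses are the variance convergence just obtained together with the Chen--Ledoux tail control, which is exactly (\ref{eq_bv1}), since centering by $\dE(g)\dE(h)$ coincides, up to the negligible drift and bias corrections, with centering by $\dE Y_j^n$. This yields the moderate deviation principle for the fluctuation term with speed $b_n^2$ and rate $x^2/2\Sigma_{\ell}(g,h)$, and combining with the vanishing bias gives the stated conclusion. I expect the variance computation to be the main obstacle: carrying out the local Gaussian moment evaluations, controlling the remainder terms uniformly in $j$, and verifying that the Riemann sums converge to (\ref{Sigma2}) at a rate fast enough to be absorbed at the moderate-deviation speed $b_n^2$ rather than merely at the central-limit scale.
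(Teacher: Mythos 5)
Your proposal follows essentially the same route as the paper's proof: decompose $V_{\ell,1}^{n}(g,h)-V_{\ell,1}(g,h)$ into a centered $1$-dependent sum plus a deterministic bias term, show the bias vanishes after the $\sqrt{n}/b_n$ rescaling, and apply the Appendix Proposition~\ref{mdpmdep} for $m$-dependent variables under the Chen--Ledoux hypothesis (\ref{eq_bv1}). The additional details you spell out (removing the drift so the increments become independent Gaussians, and the Riemann-sum computation showing $\tfrac1n{\rm Var}(\sum_j Y_j^n)\to\Sigma_{\ell}(g,h)$) are points the paper handles only implicitly or by citation, so they refine rather than change the argument.
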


\begin{rem} The condition (\ref{bv_bn1}) is weaker to the condition (1.10) in Theorem 1.3 of the paper of Djellout et al. \cite{DGW}. Indeed in the last paper,  we need additionnel condition which relies the sequence of the MDP $(b_n)_{n\ge 1}$ to the integrated volatility. This condition can be expressed in the following way in the actual context
\begin{equation}\label{tay2}
\beta_nb_n\sqrt n\longrightarrow 0,
\end{equation}
where $\beta_n=\max_{1\le i\le n}\int_{I^i}\Sigma_{\sigma_{\ell,s}}(g)\Sigma_{\sigma_{\ell,s}}(h)ds$ with $\Sigma_{\sigma}(f)$ given in (\ref{Sigma1}).
This condition (\ref{tay2})  comes from the method of the proof based on the Taylor expansion of the Laplace transform of the estimator (\ref{esti2}).
\end{rem}

\begin{rem}
Realised bipower variation, which is probably the most important subclass of our model, corresponds to the functions $g(x)=|x|^r$ and $h(x)=|x|^{q}$. In this case and under the condition that

\begin{equation}\label{rl}
\limsup_{n\rightarrow \infty}\frac{1}{b_n^2}\log n\max_{j =1}^n\mathbb P\left( |\sqrt n\Delta X_{\ell}(I^{j})|^{r}|\sqrt n\Delta X_{\ell}(I^{j+1})|^{q}>b_n\sqrt n\right)=-\infty.
\end{equation}

We have that the sequence
$$\frac{\sqrt n}{b_n}\left(V^n_{\ell,1}(r,q)-\mu_r\mu_{q}\int_0^T|\sigma_{\ell,s}|^{r+q}ds\right),$$
 satisfies the MDP with speed $b_n^2$ and rate function $I_{g,h}^{\ell}(x)=x^2/2\Sigma_{\ell}(g,h),$
with
$$\Sigma_{\ell}(g,h):=\Sigma_{\ell}(r,q)=(\mu_{2r}\mu_{2q}+2\mu_r\mu_q\mu_{r+q}-3\mu_r^2\mu_{q}^2)\int_0^T|\sigma_{\ell,u}|^{2(r+q)}du,$$
where $\mu_r=\mathbb E|z|^r$ with $z\sim \mathcal N(0,1)$.
\vspace{10pt}

If $r=2$ and $q=0$ we are in the quadratic case and the condition (\ref{rl}) is satisfied for all the sequence $(b_n)$. The result was already obtained in Djellout-Guillin-Wu \cite{DGW}.
\vspace{10pt}

If $r+q<2$ the condition (\ref{rl}) is satisfied for every sequence $b_n$.

\end{rem}

\begin{rem} The cubic power of returns is a special case of the generalised bipower variation, corresponds to the functions $g(x)=|x|^3$ and $h(x)=1$. Then, under the condition
$$
\limsup_{n\rightarrow \infty}\frac{1}{b_n^2}\log n\max_{j=1}^n\mathbb P\left( |\sqrt n\Delta X_{\ell}(I^{j})|^{3}>b_n\sqrt n\right)=-\infty,
$$
the sequence $\displaystyle\frac{\sqrt n}{b_n}\left(V^n_{\ell,1}(|x|^3,1)-\mu_3\int_0^T|\sigma_{\ell,s}|^3ds\right)$ satisfies the LDP with speed $b_n^2$ and rate function $\displaystyle I (u)=24x^2/\int_0^T \sigma_s^{6}ds.$
\end{rem}


\section{proofs}

In this section, we will use the appendix's Proposition~\ref{mdpmdep} about the the MDP for $m$-dependent random variables sequences which are not necessarily stationnary under the Chen-Ledoux type condition.

\vspace{10pt}

\noindent\emph{\textbf{Proof of Theorem \ref{mdp-1}}}

We start this section by calculating the variance of $V_n$.
Different from the method used in Hayashi and Yoshida \cite{HN},
our approach relies on (\ref{Un-expression}), the $2$-dependent representation of $U_n$. Moreover, the analysis of MDP will benefit from this straightforward thoughts.

\begin{lem}\label{lem-variance}
For the estimator $V_n$ defined by (\ref{def-V}), we have
$$
{\rm Var}(V_n)=\sum_{i=1}^{\hat{n}}\sum_{j=1}^{m}\nu_1(\hat{I}^{i})\nu_2(J^j)K^{\hat{I}}_{ij}+\sum_{i=1}^{\hat{n}}\nu^2(\hat{I}^{i})
+\sum_{j=1}^{m}\nu^2(J^j)
-\sum_{i=1}^{\hat{n}}\sum_{j=1}^{m}\nu^2\left(\hat{I}^{i}\cap J^{j}\right).
$$
\end{lem}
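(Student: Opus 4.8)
The plan is to remove the drift, exploit that the resulting increments are centered and jointly Gaussian, so that the fourth moments entering the variance collapse—via Wick's (Isserlis') formula—into products of covariances, and then to identify each surviving term geometrically. \textbf{Step 1 (Gaussian increments).} Since $X^0_{\ell,t}=X_{\ell,t}-A_{\ell,t}$ obeys $dX^0_{\ell,t}=\sigma_{\ell,t}\,dW_{\ell,t}$ with deterministic $\sigma_{\ell,\cdot}$, for any Borel set $I$ the increment $\Delta X^0_\ell(I)=\int_I\sigma_{\ell,t}\,dW_{\ell,t}$ is centered Gaussian, and the whole family $\{\Delta X^0_1(I),\Delta X^0_2(J)\}$ is jointly Gaussian. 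The It\^o isometry together with $d\langle W_1,W_2\rangle_t=\rho_t\,dt$ gives
$$
\mathbb{E}\big[\Delta X^0_1(I)\Delta X^0_1(I')\big]=\nu_1(I\cap I'),\quad
\mathbb{E}\big[\Delta X^0_2(J)\Delta X^0_2(J')\big]=\nu_2(J\cap J'),\quad
\mathbb{E}\big[\Delta X^0_1(I)\Delta X^0_2(J)\big]=\nu(I\cap J);
$$
in particular, the $I^i$ being pairwise disjoint, $\mathbb{E}[\Delta X^0_1(I^i)\Delta X^0_1(I^{i'})]=\nu_1(I^i)\delta_{ii'}$, and likewise for the $J^j$.

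\textbf{Step 2 (Wick expansion).} Set $a_i=\Delta X^0_1(I^i)$ and $b_j=\Delta X^0_2(J^j)$, so that the first line of (\ref{def-V}) reads $V_n=\sum_{i,j}a_ib_jK^I_{ij}$ and ${\rm Var}(V_n)=\sum_{i,i',j,j'}K^I_{ij}K^I_{i'j'}\,{\rm Cov}(a_ib_j,a_{i'}b_{j'})$. As the four factors are centered and jointly Gaussian, Wick's formula yields ${\rm Cov}(a_ib_j,a_{i'}b_{j'})=\mathbb{E}[a_ia_{i'}]\,\mathbb{E}[b_jb_{j'}]+\mathbb{E}[a_ib_{j'}]\,\mathbb{E}[a_{i'}b_j]$, whence ${\rm Var}(V_n)=S_1+S_2$ with
$$
S_1=\sum_{i,i',j,j'}K^I_{ij}K^I_{i'j'}\,\nu_1(I^i)\delta_{ii'}\,\nu_2(J^j)\delta_{jj'},\qquad
S_2=\sum_{i,i',j,j'}K^I_{ij}K^I_{i'j'}\,\nu(I^i\cap J^{j'})\,\nu(I^{i'}\cap J^j).
$$
Collapsing the Kronecker deltas and using $K^I_{ij}\in\{0,1\}$ gives at once $S_1=\sum_{i,j}K^I_{ij}\nu_1(I^i)\nu_2(J^j)$, the first term of the asserted formula written for the partition $(I^i)$.

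\textbf{Step 3 (the cross term $S_2$).} I would treat $S_2$ by inclusion--exclusion on $\{i=i'\}$ and $\{j=j'\}$, noting that on the support of the $\nu$-factors the indicators $K^I$ are automatically $1$. For $i=i'$ the inner sum factorizes, $\sum_{j,j'}\nu(I^i\cap J^{j'})\nu(I^i\cap J^j)=\big(\sum_j\nu(I^i\cap J^j)\big)^2=\nu^2(I^i)$, since $\bigcup_j(I^i\cap J^j)=I^i$ is a disjoint union; symmetrically the terms with $j=j'$ sum to $\sum_j\nu^2(J^j)$, while the overlap $\{i=i',\,j=j'\}$ contributes $\sum_{i,j}\nu^2(I^i\cap J^j)$. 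The decisive point, which I expect to be the main obstacle, is that \emph{every} term with $i\neq i'$ and $j\neq j'$ vanishes: such a term is nonzero only if the four sets $I^i\cap J^j$, $I^{i'}\cap J^{j'}$, $I^i\cap J^{j'}$, $I^{i'}\cap J^j$ are all nonempty. Assuming without loss of generality $i<i'$, so that $I^i$ lies to the left of $I^{i'}$, the requirement that $I^i$ meet both $J^j$ and $J^{j'}$ forces the right endpoint of $I^i$ past the right end of whichever of $J^j,J^{j'}$ lies further to the left; since $I^{i'}$ lies entirely to the right of $I^i$, it cannot meet that leftmost interval, contradicting one of the four nonemptiness conditions. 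Hence these terms drop, and $S_2=\sum_i\nu^2(I^i)+\sum_j\nu^2(J^j)-\sum_{i,j}\nu^2(I^i\cap J^j)$.

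\textbf{Step 4 (conclusion).} Summing $S_1$ and $S_2$ expresses ${\rm Var}(V_n)$ through the original partition $(I^i)_{i=1,\dots,n}$. By Remark~\ref{rmk-mdp1} this combination is left unchanged when $(I^i)_i$ is replaced by the reduced partition $(\hat I^i)_{i=1,\dots,\hat n}$ in the first, second and fourth sums, which is precisely the stated formula.
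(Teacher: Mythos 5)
Your proof is correct, and it takes a genuinely different route from the paper's. The paper never touches the double sum over the original partitions: it works with the reduced-partition representation $V_n=\sum_{i=1}^{\hat n}\Delta M_1(\hat I^i)\,\Delta M_2\big(\cup_{j\in\hat J(i)}J^j\big)$ of (\ref{Un-expression}), exploits that these summands are $2$-dependent, and splits the variance into $D_1+D_2+\cdots+D_5$ (diagonal variances plus covariances at lags $\pm1,\pm2$); each block is then evaluated by Gaussian factorization together with the interval-geometry fact $K^{\hat I}_{ij_1}K^{\hat I}_{(i+1)j_2}K^{\hat I}_{(i+1)j_1}K^{\hat I}_{ij_2}\neq0\Leftrightarrow j_1=j_2$, and the blocks are reassembled via $J^j=\cup_{\ell=-2}^{2}(\hat I^{i+\ell}\cap J^j)$. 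You instead expand $V_n=\sum_{i,j}a_ib_jK^{I}_{ij}$ over the \emph{original} partitions, apply Isserlis' formula globally to all index quadruples, and resolve the combinatorics by inclusion--exclusion plus the key geometric lemma that every crossed term ($i\neq i'$ and $j\neq j'$) vanishes; I checked this lemma and it is sound (with $i<i'$ and, say, $j<j'$, the conditions $I^i\cap J^{j'}\neq\emptyset$ and $I^{i'}\cap J^j\neq\emptyset$ force $t_{i'-1}\geq t_i>s_{j'-1}\geq s_j$, a contradiction), as is your observation that the indicators $K^I$ are redundant wherever the $\nu$-factors are nonzero. The final appeal to Remark~\ref{rmk-mdp1} to pass from the original to the reduced partition is legitimate, since that identity is purely algebraic and independent of the lemma. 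Both arguments rest on the same two ingredients --- joint Gaussianity of the drift-free increments (Wick/Isserlis) and the left-to-right ordering of the intervals --- but your decomposition is more symmetric, needs no $2$-dependence, and bypasses the lag-by-lag bookkeeping with $\hat J(i)$ entirely. What the paper's route buys in exchange is that it performs the computation directly on the $2$-dependent array $\{Y_i\}$ that is subsequently fed into Proposition~\ref{mdpmdep} for the MDP (and it yields the per-summand variance contribution recorded in Remark~\ref{rmk-1}), so the same structural decomposition is reused twice.
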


\begin{proof}
For $\ell=1,2$, letting $\displaystyle
M_{\ell,t}=\int_0^t\sigma_{\ell,s}dW_{\ell,s},\quad t\in[0,T],$
then
$$
V_n=\sum_{{i}=1}^{\hat{n}}\Delta M_{1}(\hat{I}^{{i}})\Delta M_{2}\big(\underset{j\in \hat{J}({i})}{\cup} J^j\big).
$$

Since $\displaystyle \left\{\Delta M_{1}(\hat{I}^{i})\Delta M_{2}\big(\underset{ j\in \hat{J}(i) }{\cup} J^j\big)\right\}_{1\leq i\leq\hat{n}}$
are $2$-dependent, one can write that
\begin{align*}
{\rm Var}(V_n)&=\sum_{\hat{i}=1}^{\hat{n}}{\rm Var}\left(\Delta M_{1}(\hat{I}^{i})\Delta M_{2}\big(\underset{j\in \hat{J}(i)}{\cup}J^j\big)\right)\\
&\quad+\sum_{i=1}^{\hat{n}}{\rm Cov}\left(\Delta M_{1}(\hat{I}^{i})\Delta M_{2}\big(\underset{j\in \hat{J}(i)}{\cup}J^j\big),
\Delta M_{1}(\hat{I}^{i+1})\Delta M_{2}\big(\underset{j\in \hat{J}(i+1)}{\cup}J^j\big)\right)\\
&\quad+\sum_{i=1}^{\hat{n}}{\rm Cov}\left(\Delta M_{1}(\hat{I}^{i})\Delta M_{2}\big(\underset{j\in \hat{J}(i)}{\cup}J^j\big),
\Delta M_{1}(\hat{I}^{i+2})\Delta M_{2}\big(\underset{j\in \hat{J}(i+2)}{\cup}J^j\big)\right)\\
&\quad+\sum_{i=1}^{\hat{n}}{\rm Cov}\left(\Delta M_{1}(\hat{I}^{i})\Delta M_{2}\big(\underset{j\in \hat{J}(i)}{\cup}J^j\big),
\Delta M_{1}(\hat{I}^{i-1})\Delta M_{2}\big(\underset{j\in \hat{J}(i-1)}{\cup}J^j\big)\right)\\
&\quad+\sum_{i=1}^{\hat{n}}{\rm Cov}\left(\Delta M_{1}(\hat{I}^{i})\Delta M_{2}\big(\underset{j\in \hat{J}(i)}{\cup}J^j\big),
\Delta M_{1}(\hat{I}^{i-2})\Delta M_{2}\big(\underset{{j\in \hat{J}(i-2)}}{\cup}J^j\big)\right)\\
&:=D_1+D_2+D_3+D_4+D_5,
\end{align*}
where~$\hat{I}^{i}=\emptyset$ for $i<1$ or $i>\hat{n}$.

We will only consider $D_1$ and $D_2$, the other terms can be dealt with in the same way.
A simple calculation gives us
\begin{equation}\label{D1}
\begin{aligned}
D_1&=\sum_{i=1}^{\hat{n}}\left(\nu_1(\hat{I}^{i})\nu_2\big(\underset{j\in \hat{J}(i)}{\cup}J^j\big)+\nu^2(\hat{I}^{i})\right)\\
&=\sum_{i=1}^{\hat{n}}\sum_{j=1}^{m}\nu_1(\hat{I}^{i})\nu_2(J^j)K^{\hat{I}}_{ij}+\sum_{\hat{i}=1}^{\hat{n}}\nu^2(\hat{I}^{i}).
\end{aligned}
\end{equation}
Moreover, from the fact
\begin{align*}
\Delta M_{2}\big(\underset{j\in \hat{J}(i)}{\cup}J^j\big)&=\Delta M_{2}\big(\hat{I}^{i}\big)
+\Delta M_{2}\big(\underset{j\in \hat{J}(i)}{\cup}J^j\cap \hat{I}^{i+1}\big)+\Delta M_{2}\big(\underset{j\in \hat{J}(i)}{\cup}J^j\cap \hat{I}^{i+2}\big)\\
&\quad+\Delta M_{2}\big(\underset{j\in \hat{J}(i)}{\cup}J^j\cap \hat{I}^{i-1}\big)
+\Delta M_{2}\big(\underset{j\in \hat{J}(i)}{\cup}J^j\cap \hat{I}^{i-2}\big),
\end{align*}
it follows that
\begin{align*}
&\mathbb E\left(\Delta M_{1}(\hat{I}^{i})\Delta M_{1}(\hat{I}^{i+1})
\Delta M_{2}\big(\underset{j\in \hat{J}(i)}{\cup}J^j\big)\Delta M_{2}\big(\underset{j\in \hat{J}(i+1)}{\cup}J^j\big)\right)\\
&=\nu\big(\hat{I}^{i}\big)\nu\big(\hat{I}^{i+1}\big)
+\nu\big(\underset{j\in \hat{J}(i)}{\cup}J^j\cap \hat{I}^{i+1}\big)
\nu\big(\underset{j\in \hat{J}(i+1)}{\cup}J^j\cap \hat{I}^{i}\big)\\
&=\nu\big(\hat{I}^{i}\big)\nu\big(\hat{I}^{i+1}\big)
+\sum_{j_1=1}^{m}\sum_{j_2=1}^{m}\nu\big(J^{j_1}\cap \hat{I}^{i+1}\big)
\nu\big(J^{j_2}\cap \hat{I}^{i}\big)K^{\hat{I}}_{ij_1}K^{\hat{I}}_{(i+1)j_2}.
\end{align*}
We can write the second term in the above equality as
$$
\nu\big(J^{j_1}\cap \hat{I}^{i+1}\big)
\nu\big(J^{j_2}\cap \hat{I}^{i}\big)K^{\hat{I}}_{ij_1}K^{\hat{I}}_{(i+1)j_2}=\nu\big(J^{j_1}\cap \hat{I}^{i+1}\big)
\nu\big(J^{j_2}\cap \hat{I}^{i}\big)K^{\hat{I}}_{ij_1}K^{\hat{I}}_{(i+1)j_2}K^{\hat{I}}_{(i+1)j_1}K^{\hat{I}}_{ij_2}.
$$
By the following fact,
$$
K^{\hat{I}}_{ij_1}K^{\hat{I}}_{(i+1)j_2}K^{\hat{I}}_{(i+1)j_1}K^{\hat{I}}_{ij_2}\neq0\Leftrightarrow j_1=j_2,
$$
we can obtain immediately that
\begin{align*}
&\mathbb E\left(\Delta M_{1}(\hat{I}^{i})\Delta M_{1}(\hat{I}^{i+1})
\Delta M_{2}\big(\underset{j\in \hat{J}(i)}{\cup}J^j\big)\Delta M_{2}\big(\underset{j\in \hat{J}(i+1)}{\cup}J^j\big)\right)\\
&=\nu\left(\hat{I}^{i}\right)\nu\left(\hat{I}^{i+1}\right)
+\sum_{j=1}^{m} \nu\left(\hat{I}^{i+1}\cap J^{j}\right)
\nu\left(\hat{I}^{i}\cap J^{j}\right),
\end{align*}
which implies that
\begin{equation}\label{D2}
D_2=\sum_{i=1}^{\hat{n}}\sum_{j=1}^{m} \nu\big(\hat{I}^{i+1}\cap J^{j}\big)
\nu\big(\hat{I}^{i}\cap J^{j}\big).
\end{equation}

Applying the same arguments to $D_3, D_4, D_5$,
\begin{equation}\label{D3}
D_3=\sum_{i=1}^{\hat{n}}\sum_{j=1}^{m} \nu\big(\hat{I}^{i+2}\cap J^{j}\big)
\nu\big(\hat{I}^{i}\cap J^{j}\big),
\end{equation}
\begin{equation}\label{D4}
D_4=\sum_{i=1}^{\hat{n}}\sum_{j=1}^{m} \nu\big(\hat{I}^{i-1}\cap J^{j}\big)
\nu\big(\hat{I}^{i}\cap J^{j}\big),
\end{equation}
\begin{equation}\label{D5}
D_5=\sum_{i=1}^{\hat{n}}\sum_{j=1}^{m} \nu\big(\hat{I}^{i-2}\cap J^{j}\big)
\nu\big(\hat{I}^{i}\cap J^{j}\big).
\end{equation}

Putting~(\ref{D2}) through ~(\ref{D5}) together, and noting fact that if $K^{\hat{I}}_{ij}\neq0$, then $$
J^j=\cup_{\ell=-2}^{2}\left(I^{i+\ell}\cap J^j\right),
$$
we have
\begin{align*}
D_2+D_3+D_4+D_5&=
\sum_{i=1}^{\hat{n}}\sum_{j=1}^{m} \left(\nu(J^j)-\nu\big(\hat{I}^{i}\cap J^{j}\big)\right)
\nu\big(\hat{I}^{i}\cap J^{j}\big)\\
&=
\sum_{j=1}^{m}\nu^2(J^j)
-\sum_{i=1}^{\hat{n}}\sum_{j=1}^{m}\nu^2\big(\hat{I}^{i}\cap J^{j}\big).
\end{align*}
Together with (\ref{D1}), we can conclude
$$
{\rm Var}(V_n)=\sum_{i=1}^{\hat{n}}\sum_{j=1}^{m}\nu_1(\hat{I}^{i})\nu_2(J^j)K^{\hat{I}}_{ij}+\sum_{i=1}^{\hat{n}}\nu^2(\hat{I}^{i})
+\sum_{j=1}^{m}\nu^2(J^j)
-\sum_{i=1}^{\hat{n}}\sum_{j=1}^{m}\nu^2\left(\hat{I}^{i}\cap J^{j}\right).
$$
\end{proof}

\begin{rem}\label{rmk-1}
From the proof of Lemma \ref{lem-variance}, each $\Delta M_{1}(\hat{I}^{i})\Delta M_{2}\big(\underset{j\in \hat{J}(i)}{\cup}J^j\big),$ for $1\leq i\leq\hat{n}$
contributes to the variance of $V_n$ with
$$
\sum_{j=1}^{m}\nu_1(\hat{I}^{i})\nu_2(J^j)K^{\hat{I}}_{ij}+\nu^2(\hat{I}^{i})
+\sum_{j=1}^{m} \left(\nu(J^j)-\nu\big(\hat{I}^{i}\cap J^{j}\big)\right)
\nu\big(\hat{I}^{i}\cap J^{j}\big).\quad\Box\\
$$
\end{rem}


\noindent\emph{\textbf{Proof of Theorem \ref{mdp-1}}}
For the expectation of $V_n$, we have
$$
\mathbb EV_n=\sum_{i=1}^{\hat{n}}\nu\big(\hat{I}^{i}\big)
=\int_{0}^{T}\sigma_{1,t}\sigma_{2,t}\rho_tdt.
$$
Therefore,
\begin{equation}\label{Un-expression-1}
\begin{aligned}
V_n-\int_{0}^{T}\sigma_{1,t}\sigma_{2,t}\rho_tdt
&=\sum_{i=1}^{\hat{n}}\left(\Delta M_{1}(\hat{I}^{i})\Delta M_{2}\big(\underset{j\in \hat{J}(i)}{\cup}J^j\big)
-\nu\big(\hat{I}^{i}\big)\right):=\sum_{i=1}^{\hat{n}}Y_{i},
\end{aligned}
\end{equation}
where $Y_{i}=\Delta M_{1}(\hat{I}^{i})\Delta M_{2}\big(\underset{j\in \hat{J}(i)}{\cup}J^j\big)-\nu\big(\hat{I}^{i}\big)$. Consequently, the random variable sequence~$\{Y_{i}\}_{1\leq i\leq\hat{n}}$ are $2$-dependent.  Since $Y_i$ is a product of two gaussian random variable, so there exist $\delta>0$ such that 
$$\lim_{n\rightarrow \infty}\sup_{1\le i\le n} \mathbb E(e^{\delta | Y_i|})<\infty.$$

Applying the exponential Chebyshev inequality yields with $\delta$ as before

\begin{align*}
\lim_{n\rightarrow \infty}\frac{1}{b_n^2}\log\left[n\max_{i=1}^n\mathbb P(|Y_i|\ge \frac{b_n}{\sqrt {c_n}})\right]&\le \lim_{n\rightarrow \infty}\left(\frac{\log(n)}{b_n^2}-\delta\frac{1}{b_n\sqrt{c_n}}+\frac{1}{b_n^2}\log\max_{i=1}^n\mathbb E(e^{\delta | Y_i|})\right)\\
&= \lim_{n\rightarrow \infty}\frac{1}{b_n\sqrt{c_n}}\left(\frac{\log(n)\sqrt{c_n}}{b_n}-\delta\right)\\
&=\lim_{n\rightarrow \infty}-\frac{1}{b_n\sqrt{c_n}}\delta=-\infty.
\end{align*}

Hence, with appendix Proposition~\ref{mdpmdep}, the Theorem~\ref{mdp-1} holds.
\quad$\Box$

\vspace{10pt}

\noindent\emph{\textbf{Proof of Theorem \ref{mdp-2}}}  From Theorem \ref{mdp-1}, we only need to show the exponential equivalence of
$U_n$ and $V_n$, i.e. for any $\delta>0$
\begin{equation}\label{Un-Vn}
\lim_{n\to+\infty}\frac{1}{b_n^2}\log \mathbb P\left(\frac{1}{b_nc_n^{1/2}}|U_n-V_n|\geq\delta\right)=-\infty.
\end{equation}

In fact, by simple calculation, we can write that
\begin{align*}
|U_n-V_n|&=\sum_{i=1}^{\hat{n}}\left|\Delta A_1(\hat{I}_i)\Delta A_2(\underset{j\in \hat{J}({i})}{\cup}J^j)\right|
+\sum_{i=1}^{\hat{n}}\left|\Delta A_1(\hat{I}_i)\Delta M_2(\underset{j\in \hat{J}({i})}{\cup}J^j)\right|\\
&\qquad\quad+\sum_{i=1}^{\hat{n}}\left|\Delta M_1(\hat{I}_i)\Delta A_2(\underset{j\in \hat{J}({i})}{\cup}J^j)\right|\\
&\leq3Tr_n\max_{\ell=1,2}\|b_{\ell}\|_{\infty}+r_n\|b_{1}\|_{\infty}\sum_{i=1}^{\hat{n}}\left|\Delta M_2(\underset{j\in \hat{J}({i})}{\cup}J^j)\right|+3r_n\|b_{2}\|_{\infty}\sum_{i=1}^{\hat{n}}\left|\Delta M_1(\hat{I}_i)\right|\\
&\leq3Tr_n\max_{\ell=1,2}\|b_{\ell}\|_{\infty}+3r_n\|b_{1}\|_{\infty}\sum_{i=1}^{\hat{n}}\left|\Delta M_2(\hat{I}_i)\right|+3r_n\|b_{2}\|_{\infty}\sum_{i=1}^{\hat{n}}\left|\Delta M_1(\hat{I}_i)\right|.\\
\end{align*}

In the calculation above, we have used Lemma 5 in \cite{HY2}, which says that 
$$\sum_{i=1}^{\hat{n}}|\underset{j\in \hat{J}({i})}{\cup}J^j|\le 3T.$$

Using condition {\bf(C4)}, we have as $n\longrightarrow\infty$
$$
\frac{3Tr_n\max_{\ell=1,2}\|b_{\ell}\|_{\infty}}{b_nc_n^{1/2}}\longrightarrow 0.
$$

Therefore, to prove (\ref{Un-Vn}), we only need to show that for any $\delta>0$
\begin{equation}\label{eq-1}
\lim_{n\to+\infty}\frac{1}{b_n^2}\log \mathbb P\left(\frac{r_n}{b_nc_n^{1/2}}\sum_{i=1}^{\hat{n}}\left|\Delta M_1(\hat{I}_i)\right|\geq\delta\right)=-\infty,
\end{equation}
and
\begin{equation}\label{eq-2}
\lim_{n\to+\infty}\frac{1}{b_n^2}\log\mathbb P\left(\frac{r_n}{b_nc_n^{1/2}}\sum_{i=1}^{\hat{n}}\left|\Delta M_2(\hat{I}_i)\right|\geq\delta\right)=-\infty.
\end{equation}

In fact, according to Chebyshev's inequality, for (\ref{eq-1}), 
\begin{align*}
\mathbb P\left(\frac{r_n}{b_nc_n^{1/2}}\sum_{i=1}^{\hat{n}}\left|\Delta M_1(\hat{I}_i)\right|\geq\delta\right)&\leq\inf_{\lambda>0}\exp\left\{-\lambda\delta+\frac{\lambda^2r_n^2\int_0^T\sigma_{1,t}^2dt}{2b_n^2c_n}\right\}\\
&\leq\exp\left\{-\frac{b_n^2c_n\delta^2}{2r_n^2\int_0^T\sigma_{1,t}^2dt}\right\},
\end{align*}
which implies that (\ref{eq-1}) by condition {\bf(C4)}. Moreover, (\ref{eq-2}) can be established similarly.
\quad$\Box$

\vspace{10pt}

\noindent\emph{\textbf{Proof of Proposition~\ref{mdp_bv1}}}
Our proof is an application of Proposition~\ref{mdpmdep}.\\

\begin{align*}
V_{\ell,1}^{n}(g,h)-V_{\ell,1}(g,h)&=\frac{1}{n}\sum_{i=1}^{n}g\left(\sqrt n \Delta X_{\ell}(I^{i})\right)h\left(\sqrt n\Delta X_{\ell}(I^{i+1})\right)-\int_0^T \Sigma_{\sigma_{l,u}}(g)\Sigma_{\sigma_{l,u}}(h)\mathrm du\\
&:= \frac{1}{n}\sum_{i=1}^{n}Z_i+L_n,
\end{align*}
where 
$$Z_i=g\left(\sqrt n \Delta X_{\ell}(I^{i})\right)h\left(\sqrt n\Delta X_{\ell}(I^{i+1})\right)-\mathbb{E}\biggl(g(\sqrt n \Delta X_{\ell}(I^{i}))h(\sqrt n \Delta X_{\ell}(I^{i+1}))\biggr),$$
and
$$
L_n= \frac{1}{n}\sum_{i=1}^{n}\biggl[\mathbb{E}\biggl(g(\sqrt n \Delta X_{\ell}(I^{i}))h(\sqrt n \Delta X_{\ell}(I^{i+1}))\biggr)-\int_0^T \Sigma_{\sigma_{l,u}}(g)\Sigma_{\sigma_{l,u}}(h)\mathrm du\biggr].\\
$$

In fact, by a straightforward calculation, one can see that \cite{DGW}, $\sqrt{n}L_n\longrightarrow 0,$ as $n\rightarrow \infty.$

So the LDP still holds with $V_{\ell,1}(g,h)$ substitude by $\mathbb{E}V_{\ell,1}^{n}(g,h)$.

Hence, by the $1$-dependence of $(Z_i)$ and from (\ref{eq_bv1}) together the Proposition~\ref{mdpmdep}, the Proposition~\ref{mdp_bv1} holds.
\quad$\Box$

\section{Appendix A}\label{appendix}

Let us begin with some few bibliographical notes on the MDP. Borovkov and Mogulskii (\cite{BM1}, \cite{BM2}) considered the MDP for Banach valued i.i.d.r.v. sequences $(Z_n)_{n\geq 1}$. Under the condition that $\mathbb E e^{\delta |Z_1|}<+\infty,$ for some $\delta >0$, they proved the MDP for $\sum_{i=1}^{n}Z_i$.\par
For $b_n=n^{\alpha}$ with $0<\alpha<\frac{1}{2}$, Chen \cite{CX} found the necessary and sufficient condition for the MDP in a Banach space, and he obtained the lower bound for general $b_n$ under very weak condition. Using the isoperimetry techniques, Ledoux \cite{LM}(see also \cite{EL}) obtained the necessary and sufficient condition for the general sequence $b_n$ satisfying:
\begin{equation}\label{speed}b_n\longrightarrow \infty \quad{\text{and}}\quad \frac{b_n}{\sqrt n}\longrightarrow 0.\end{equation}

The results of Ledoux \cite{LM} is extended to functional empirical processes (in the setting of non parametrical statistics) by Wu \cite{LW2}. The further developments are given by Dembo and Zeitouni \cite{DZ}. Arcones \cite{AM} obtains the MDP of functional type with the following condition

\begin{equation}
\limsup_{n\rightarrow \infty}\frac{1}{b_n^2}\log\bigg(n \mathbb P\big(|Z_1|>b_n\sqrt{n}\big)\bigg)=-\infty.
\label{A}
\end{equation}

\vspace{5pt}

How to extend the MDP to the  dependent situations has recently attracted much attention and remarkable works. The Markovian case has been studied under successively less restrictive conditions (see Wu \cite{LW2}, Chen \cite{CX} for the relevant references) and very recently under weak conditions by de Acosta \cite{DA} and Chen \cite{CX} for the lower bound (under different and non comparable conditions) and by de Acosta-Chen \cite{AC} and Chen \cite{CX} (under the same condition but different proof) for the upper bound. Guillin \cite{G1} obtained uniform (in time) MDP for  functional empirical processes. Using regeneration split chain method, Djellout and Guillin (\cite{DG}, \cite{DG1}) extend the characterization of MDP for i.i.d.r.v. case of Ledoux \cite{LM} to Markov chains. The geometric ergodicity is substituted by a tail on the first time of return to the atom. Their conditions are weaker than Theorem 1 in \cite{AC}, which allow them to obtain MDP for empirical measures and functional empirical processes.\par

For the studies of the MDP of martingale see  Puhalskii \cite{Puha}, Dembo \cite{Dembo}, Gao \cite{Gao}, Worms \cite{Worms} and Djellout \cite{DH}.

\vspace{5pt}

Those works motivate directly the studies here. Our main aim is to prove the Chen-Ledoux type theorem for the MDP of a sequence of $m$-dependent random variables.

\subsection{Moderate deviations for $m$-dependent randon variables}

Let $(b_n)$ a sequence of positive real numbers satisfying (\ref{speed}).

Let $X_n$ be an $m$-dependent random variables, namely, for every $k\ge 1$ the following two collections
$$\{X_1,\cdots,X_k\}\qquad{\text {and}} \qquad\{X_{k+m+1},X_{k+m+2},\cdots\}$$
are independent. In this terminology, an independent identically distributed (i.i.d.) random variables sequence is $0$- dependent and, for any non-negative intergers $m_1<m_2$, $m_1$ dependence implies $m_2$-dependence. The aim of this section is to consider the MDP for $m$ dependent random variables sequences which are not necessarily stationnary under the Chen-Ledoux type condition. This condition links the speed of the MDP to the queue of the distribution of the random variable.

\begin{prop}\label{mdpmdep}
Let $(X_k)$ be a sequence of $m$-dependent random variables with zero means such that 
\begin{equation}\label{Chen-Ledoux}
\limsup_{n\rightarrow \infty}\frac{1}{b_n^2}\log\left(n\max_{i=1}^n \mathbb P(|X_i|>b_n\sqrt{n})\right)=-\infty.
\end{equation}
Write $S_n=\sum_{j=1}^nX_j$ and let $(b_n)$ a sequence of positive numbers satisfying  (\ref{speed}).
Then $\displaystyle \left(\frac{S_n}{b_n\sqrt{n}}\right)_{n\ge 1}$ satisfies the LDP on $\mathbb R$ with speed $b_n^2$ and rate fonction $I(x)=x^2/2\sigma^2,$ where $\displaystyle \sigma^2:=\lim_{n\rightarrow \infty}\frac 1n {\rm Var}(S_n)<\infty.$

\end{prop}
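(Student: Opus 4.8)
The plan is to establish the principle through the G\"artner--Ellis theorem, after two reductions that neutralise, respectively, the $m$-dependence and the heavy tails. Assuming $\sigma^2>0$ (the case $\sigma^2=0$ being degenerate), the goal is to prove that for every $\lambda\in\mathbb{R}$,
\[
\Lambda(\lambda):=\lim_{n\to\infty}\frac{1}{b_n^2}\log\mathbb{E}\exp\!\left(\frac{\lambda b_n}{\sqrt n}\,S_n\right)=\frac{\lambda^2\sigma^2}{2}.
\]
Since this limit is finite and differentiable on all of $\mathbb{R}$, the G\"artner--Ellis theorem then yields the LDP at speed $b_n^2$ with rate function the Legendre transform $\Lambda^*(x)=x^2/(2\sigma^2)$, and both bounds follow at once (no separate lower-bound argument is needed, because the \emph{exact} cumulant limit above already controls both directions). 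The whole difficulty is therefore concentrated in the computation of $\Lambda$.

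First I would remove the dependence by a block decomposition. Partition $\{1,\dots,n\}$ into consecutive ``large'' blocks $B_1,B_2,\dots$ of length $p_n$ separated by ``gap'' blocks of length exactly $m$, with $p_n\to\infty$ chosen slowly enough that $p_n=o(n/b_n^2)$. Because any two large blocks are then separated by at least $m$ indices, the very definition of $m$-dependence makes the large-block sums $Y_{n,k}=\sum_{i\in B_k}X_i$ mutually independent. The total length of the gaps is $O(n/p_n)=o(n)$, so a truncation together with an exponential Chebyshev estimate, fed by the Chen-Ledoux condition \eqref{Chen-Ledoux}, shows that the rescaled sum of the gap blocks tends to $0$ superexponentially fast at speed $b_n^2$; hence $S_n$ is exponentially equivalent to $\sum_k Y_{n,k}$. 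The asymptotic variance is unaffected: $n^{-1}\sum_k {\rm Var}(Y_{n,k})\to\sigma^2$, since the only cross-covariances omitted from $\sum_k{\rm Var}(Y_{n,k})$ are those straddling the negligible gaps.

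Next I would truncate each summand at the Chen-Ledoux level. Setting $\bar X_i=X_i\mathbf{1}_{\{|X_i|\le b_n\sqrt n\}}-\mathbb{E}[X_i\mathbf{1}_{\{|X_i|\le b_n\sqrt n\}}]$, the event $\{\exists i:\ |X_i|>b_n\sqrt n\}$ has probability at most $n\max_i\mathbb{P}(|X_i|>b_n\sqrt n)$, which by \eqref{Chen-Ledoux} is superexponentially small at speed $b_n^2$, while the recentering shift is negligible by the bounded-variance bound; thus $\sum_k Y_{n,k}$ is exponentially equivalent to its truncated, recentred version $\sum_k\bar Y_{n,k}$ with $\bar Y_{n,k}=\sum_{i\in B_k}\bar X_i$. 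Factorising the Laplace transform over the now independent large blocks and expanding each factor,
\[
\frac{1}{b_n^2}\log\mathbb{E}\exp\!\Big(\frac{\lambda b_n}{\sqrt n}\sum_k \bar Y_{n,k}\Big)
=\frac{1}{b_n^2}\sum_k\log\mathbb{E}\exp\!\Big(\frac{\lambda b_n}{\sqrt n}\bar Y_{n,k}\Big)
=\frac{\lambda^2}{2n}\sum_k{\rm Var}(\bar Y_{n,k})+\mathcal E_n,
\]
and the leading term tends to $\lambda^2\sigma^2/2$ by the variance convergence of the previous step.

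The hard part will be controlling the remainder $\mathcal E_n$. The truncation level $b_n\sqrt n$ is far larger than the inverse $\sqrt n/b_n$ of the tilting parameter, so the crude cubic Taylor bound on each block mgf diverges: mass of $\bar Y_{n,k}$ near the cutoff carries an exponential weight $e^{\lambda b_n\sqrt n\cdot b_n/\sqrt n}=e^{\lambda b_n^2}$ that a mere variance (polynomial tail) bound cannot absorb. The remedy, which is the technical heart of the argument, is a two-scale estimate: one splits each block expectation according to whether $|\bar Y_{n,k}|$ lies below or above an intermediate threshold of order $\varepsilon\sqrt n/b_n$. On the bulk event the tilted increment is bounded by $\lambda\varepsilon$, so the third-order Taylor term is governed by ${\rm Var}(\bar Y_{n,k})$ and, after summation and use of $p_n=o(n/b_n^2)$, contributes $O(\varepsilon)$, which vanishes upon letting $\varepsilon\to0$ after $n\to\infty$; on the complementary large-deviation event the exponential weight must be compensated by the Chen-Ledoux tail control near the cutoff (this is exactly where \eqref{Chen-Ledoux} is used essentially, rather than just to discard the truncated mass). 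Carrying this estimate out uniformly over the $O(n/p_n)$ blocks so that $\mathcal E_n\to0$ is the delicate point; once it is done, everything else (the independence, the negligibility of the gaps and of the truncated mass, the variance identification, and the final appeal to G\"artner--Ellis followed by transfer back along the two exponential equivalences) is routine.
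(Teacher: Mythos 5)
There is a genuine gap, and it sits exactly where you placed your caveat: the control of the remainder $\mathcal{E}_n$ is not a routine technicality to be deferred, it \emph{is} the theorem. Condition (\ref{Chen-Ledoux}) gives tail control only at the single level $b_n\sqrt{n}$ (for each $n$, uniformly in $i\le n$). After you truncate at that level and tilt by $\lambda b_n/\sqrt{n}$, the tilted mass of a block sum $\bar Y_{n,k}$ must be controlled on the whole intermediate range between $\varepsilon\sqrt{n}/b_n$ and $p_n b_n\sqrt{n}$ (a block can be as large as $p_n$ times the truncation level, so the exponential weight can reach $e^{2|\lambda| p_n b_n^2}$, not merely $e^{|\lambda| b_n^2}$). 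Nothing in (\ref{Chen-Ledoux}) bounds $\mathbb{P}(|X_i|>x)$ for $x$ strictly between $\varepsilon\sqrt n/b_n$ and $b_n\sqrt n$; a Chebyshev bound on the ``large-deviation event'' of a block gives only polynomial decay, which cannot absorb these weights. One can try to recover intermediate-level tail bounds by invoking (\ref{Chen-Ledoux}) at other indices $m\neq n$ (so that $b_m\sqrt m$ sweeps the missing levels), but that requires regularity of $(b_n)$ which is not assumed, and it is precisely the kind of argument that makes the known proofs of the Chen--Ledoux theorem (Ledoux's isoperimetric method, \cite{LM}, \cite{EL}, \cite{DG}) avoid the Laplace-transform route altogether. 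The same obstruction already breaks your preliminary step: the ``exponential Chebyshev estimate'' for the gap sums produces, after truncation at $b_n\sqrt n$ and tilting at scale $b_n/\sqrt n$, factors of order $e^{\theta b_n^2}$ that are not summable against the available variance bounds once $b_n$ grows polynomially (which (\ref{speed}) permits). So if your plan were completed it would amount to reproving, from scratch and by a method with a known structural difficulty, the independent-array Chen--Ledoux MDP.

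The paper takes a different and much shorter route that bypasses Laplace transforms entirely. It treats the independent (non-identically distributed) case as a black box: Lemma \ref{mdpind}, the Chen--Ledoux MDP for independent arrays, quoted as an adaptation of \cite{DG}. Then it blocks with \emph{fixed} length $p$ (blocks $Y_k=\sum_{(k-1)p<j<kp}X_j$ separated by the single indices $X_{\ell p}$ in the case $m=1$): the blocks are independent, the gap variables $(X_{\ell p})_\ell$ are independent, and both families inherit condition (\ref{Chen-Ledoux}) by a union bound. The gap sum is then killed not by a direct exponential estimate but by applying Lemma \ref{mdpind} \emph{to the gaps themselves}: their asymptotic variance ratio is at most $C/p$, so the MDP gives an upper bound $\exp(-p\delta^2 b_n^2/(2C))$, and one lets $p\to\infty$ after $n\to\infty$. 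Finally the rate function is identified by checking $|\sigma_Y^2-\sigma^2|\le C/p\to 0$. If you want a self-contained argument in the spirit of your proposal, the honest statement of the task is: first prove Lemma \ref{mdpind} (by concentration/isoperimetry or by following \cite{DG}), and only then is your blocking-plus-exponential-equivalence reduction, or the paper's fixed-$p$ version of it, genuinely routine.
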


\begin{rem}
Remark  that the condition (\ref{Chen-Ledoux}) implies that
$$\lim_{n\rightarrow \infty}\sup_{1\le \ell\le n}\mathbb E(X^2_{\ell})<\infty.$$
This is a little adaptation of the proof of the Lemma 2.5 in \cite{EL}.
\end{rem}

\begin{rem} \label{exists} If there exists some $\delta>0$ such that 
$$\lim_{n\rightarrow\infty}\sup_{1\le \ell\le n  }\mathbb E (e^{\delta |X_{\ell}|})<\infty,$$
so the condition (\ref{Chen-Ledoux}) is satisfied for every sequence $b_n$. See Remark 2.32 in  \cite{EL}.
\end{rem}

\begin{rem}See \cite{DG}. If $b_n=n^{\frac{1}{p}-\frac{1}{2}}$ with  $1<p<2$ so the condition (\ref{Chen-Ledoux}) is equivalent to
$$\lim_{n\rightarrow\infty}\sup_{1\le \ell\le n  }\mathbb E (e^{\delta |X_{\ell}|^{2-p}})<\infty.$$

\end{rem}

\begin{lem}\label{mdpind}Let $(Y_k)$ be a sequence of independent random variables with zero means such that  the condition (\ref{Chen-Ledoux}) is satisfied. Write $S'_n=\sum_{j=1}^nY_j$ and let $(b_n)$ a sequence of positive numbers satisfying  (\ref{speed}). Then $\displaystyle \left(\frac{S'_n}{b_n\sqrt{n}}\right)_{n\ge1}$ satisfies the LDP on $\mathbb R$ with speed $b_n^2$ and rate fonction $I(x)=x^2/2\sigma^2,$ where $\displaystyle\sigma^2:=\lim_{n\rightarrow \infty}\frac 1n {\rm Var}(S_n)<\infty.$
\end{lem}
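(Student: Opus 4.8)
The plan is to reduce, by truncation, to a sum of independent \emph{bounded} variables, to obtain the LDP for the truncated sum through the G\"artner--Ellis theorem at speed $b_n^2$, and finally to transfer it to $S'_n/(b_n\sqrt n)$ by exponential equivalence. Throughout I set $\eta_n:=b_n\sqrt n$ and split each variable as $Y_i=Y_i'+Y_i''$, where $Y_i':=Y_iI_{\{|Y_i|\le\eta_n\}}$ and $Y_i'':=Y_iI_{\{|Y_i|>\eta_n\}}$; recentering, I write $\widetilde Y_i:=Y_i'-\mathbb E Y_i'$ and $\widetilde S_n:=\sum_{i=1}^n\widetilde Y_i$, so that the $\widetilde Y_i$ are independent, centered and bounded by $2\eta_n$.

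First I would establish that $\widetilde S_n$ and $S'_n$ are exponentially equivalent at scale $\eta_n$ and speed $b_n^2$, i.e.
\[
\limsup_{n\to\infty}\frac{1}{b_n^2}\log\mathbb P\left(\frac{1}{\eta_n}\bigl|S'_n-\widetilde S_n\bigr|>\delta\right)=-\infty,\qquad\delta>0.
\]
Since $\mathbb E Y_i=0$ one has $S'_n-\widetilde S_n=\sum_i Y_i''-\sum_i\mathbb E Y_i''$. On the event $\{\max_i|Y_i|\le\eta_n\}$ the random part $\sum_i Y_i''$ vanishes, so $\{|\sum_i Y_i''|>\eta_n\delta\}\subseteq\bigcup_i\{|Y_i|>\eta_n\}$ and its probability is at most $n\max_i\mathbb P(|Y_i|>\eta_n)$, which is super-exponentially small by the Chen--Ledoux condition (\ref{Chen-Ledoux}). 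The deterministic shift is controlled by Cauchy--Schwarz, $|\mathbb E Y_i''|\le(\mathbb E Y_i^2)^{1/2}\mathbb P(|Y_i|>\eta_n)^{1/2}$, together with the bound $\sup_i\mathbb E Y_i^2<\infty$ from the Remark following Proposition~\ref{mdpmdep} and again (\ref{Chen-Ledoux}); hence $\eta_n^{-1}\sum_i|\mathbb E Y_i''|\to0$ and contributes nothing.

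Next I would verify the G\"artner--Ellis hypothesis for $\widetilde S_n/(b_n\sqrt n)$: for every $\lambda\in\mathbb R$,
\[
\Lambda(\lambda):=\lim_{n\to\infty}\frac{1}{b_n^2}\log\mathbb E\exp\left(\frac{\lambda b_n}{\sqrt n}\,\widetilde S_n\right)=\frac{\lambda^2\sigma^2}{2}.
\]
By independence this equals $b_n^{-2}\sum_i\log\mathbb E e^{t_n\widetilde Y_i}$ with $t_n:=\lambda b_n/\sqrt n\to0$ by (\ref{speed}). Writing $\log\mathbb E e^{t_n\widetilde Y_i}=\tfrac{t_n^2}{2}\mathbb E\widetilde Y_i^2+R_{i,n}$, the leading term sums to $\tfrac{\lambda^2}{2}\cdot\tfrac1n\sum_i\mathbb E\widetilde Y_i^2$, and since $\tfrac1n\mathrm{Var}(S'_n)\to\sigma^2$ and the truncation error in the second moments vanishes by (\ref{Chen-Ledoux}), this converges to $\tfrac{\lambda^2\sigma^2}{2}$.

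The hard part will be showing $b_n^{-2}\sum_i R_{i,n}\to0$, since truncation at $\eta_n$ does not make the individual exponents $t_n\widetilde Y_i$ uniformly small (they may reach order $|\lambda|b_n^2$). To handle this I would insert a second, \emph{constant}, truncation level $A$: using $|e^x-1-x|\le\tfrac{x^2}{2}e^{|x|}$, on $\{|Y_i|\le A\}$ the exponents are genuinely $O(b_n/\sqrt n)$ and the resulting cubic remainder, summed and divided by $b_n^2$, is $O(b_n/\sqrt n)\to0$; the contribution of $\{A<|Y_i|\le\eta_n\}$ is made arbitrarily small by first letting $n\to\infty$ and then $A\to\infty$, using $\sup_i\mathbb E Y_i^2<\infty$ and the super-exponential tail estimate (\ref{Chen-Ledoux}). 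Once $\Lambda(\lambda)=\lambda^2\sigma^2/2$ is established for all $\lambda$, the limit is finite, everywhere differentiable and steep, so the G\"artner--Ellis theorem (see \cite{DZ}) yields the LDP for $\widetilde S_n/(b_n\sqrt n)$ with good rate function the Legendre transform $\sup_\lambda\bigl(\lambda x-\tfrac{\lambda^2\sigma^2}{2}\bigr)=x^2/2\sigma^2$. Combining this with the exponential equivalence of the first step transfers the LDP to $S'_n/(b_n\sqrt n)$, which is the assertion of the Lemma.
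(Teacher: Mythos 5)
Your Step 1 (super-exponential equivalence of $S'_n$ with the sum $\widetilde S_n$ truncated at $\eta_n=b_n\sqrt n$) is correct, and so is your treatment of the range $\{|Y_i|\le A\}$ in the G\"artner--Ellis step. The proof breaks down exactly at the point you yourself flag as ``the hard part'', and the fix you propose cannot work. On the event $\{A<|Y_i|\le\eta_n\}$ the exponent $t_n\widetilde Y_i$ can be as large as $2|\lambda|b_n^2$, so the integrand $e^{t_n\widetilde Y_i}$ can be of size $e^{2|\lambda|b_n^2}$, while the only bound on the probability of this event that is uniform in $n$ is Chebyshev's, $\sup_i\mathbb P(|Y_i|>A)\le C/A^2$. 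Consequently every estimate of the middle-range contribution to $b_n^{-2}\sum_i\log\mathbb E e^{t_n\widetilde Y_i}$ built from these two ingredients is of the form $(C/A^2)e^{c|\lambda|b_n^2}$ (times powers of $b_n$), which tends to $+\infty$ as $n\to\infty$ for every fixed $A$: the iterated limit ``first $n\to\infty$, then $A\to\infty$'' that you invoke equals $+\infty$, not $0$. The hypothesis (\ref{Chen-Ledoux}) cannot rescue this, because it controls the tails only at the single level $b_n\sqrt n$: it says nothing about $\mathbb P(|Y_i|>\delta b_n\sqrt n)$ for $0<\delta<1$, let alone about $\mathbb P(|Y_i|>A)$. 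Moving the inner truncation level up does not help either: cutting at $\epsilon b_n\sqrt n$ reproduces the factor $e^{c\epsilon|\lambda|b_n^2}$ (even Bennett--Bernstein bounds produce it), while cutting low enough for the Laplace method to be safe, namely at $K\sqrt n/b_n$ so that $t_n|Y_i|=O(1)$, destroys the exponential-equivalence step, since (\ref{Chen-Ledoux}) gives no control whatsoever on $n\mathbb P(|Y_i|>K\sqrt n/b_n)$.

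This is not a removable technicality but the central difficulty of Chen--Ledoux-type MDPs: under (\ref{Chen-Ledoux}) the $Y_i$ need not possess any exponential moments, and there is no general reason for the normalized log-Laplace transform of the truncated sum to converge to $\lambda^2\sigma^2/2$. Making a Laplace-transform argument work would require tail control at all intermediate levels $\delta b_n\sqrt n$ together with a dyadic decomposition of the range $(\sqrt n/b_n,\eta_n]$; for a general sequence $(b_n)$ satisfying only (\ref{speed}), and for non-identically distributed $Y_i$, such control does not follow from the single-level condition (\ref{Chen-Ledoux}). This is precisely why the proofs in the literature avoid G\"artner--Ellis: Ledoux's argument \cite{LM} is isoperimetric/concentration-based (as the paper itself recalls), Arcones \cite{AM} relies on comparison and maximal inequalities, and the paper's own one-sentence proof of Lemma \ref{mdpind} simply defers to the i.i.d.\ argument of \cite{DG}, which is of the same nature. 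A secondary gap of the same flavour: your claim that ``the truncation error in the second moments vanishes by (\ref{Chen-Ledoux})'' requires uniform integrability of $\{Y_i^2\}$, which follows neither from $\sup_i\mathbb E Y_i^2<\infty$ nor from (\ref{Chen-Ledoux}) without an additional argument. To repair the proof you must replace the Laplace-transform step by a concentration-type argument; patching the Taylor expansion cannot close the gap.
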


{\textbf{Proof of Lemma \ref{mdpind}}} The proof of this result is a little adaptation of the proof of the independent and identically distributed random variables given in \cite{DG}. $\Box$

\vspace{15pt}

{\textbf{Proof of Proposition \ref{mdpmdep}}} 
We will do the proof in the case $m=1$ for simplicity.

Fix the integer $p>1$ and write, for each $n\ge 1$, $$n=k_np+r_n$$
where $k_n$ and $r_n$ are non-negative integers with $0\le r_n\le p-1.$ Define
$$Y_k=\sum_{(k-1)p<j<kp}X_j,\qquad k=1,2,\cdots$$
Then $(Y_k)_{k\ge 1}$ is an independent random variables and

Notice that 
$$S_n=\sum_{\ell=1}^{k_n}Y_{\ell}+\sum_{\ell =1}^{k_n}X_{\ell p}+\sum_{\ell=k_np+1}^nX_j \qquad (n\ge 1).$$

We have to prove that 
\begin{equation}\label{N1}
\limsup_{n\rightarrow \infty}\frac{1}{b_n^2}\log{\mathbb P}\left(\left|\sum_{\ell= 1}^{k_n}X_{\ell p}\right| >\delta b_n\sqrt{n}\right)=-\infty.
\end{equation}

Since the random variables $(X_{\ell p})_{\ell \ge 1}$ are independent, by Lemma \ref{mdpind}, we use the MDP for $\displaystyle\left(\frac{1}{b_n\sqrt{n}}\sum_{\ell=1}^{k_n}X_{\ell p}\right)_{\ell \ge 1}$, to obtain that for $n$ large 

$$\frac{1}{b_n^2}\log{\mathbb P}\left(\left|\sum_{\ell= 1}^{k_n}X_{\ell p}\right| >\delta b_n\sqrt{n}\right)\le -\frac{\delta^2}{2\sigma^2_0},$$
where 
$$\sigma^2_0=\lim_{n\rightarrow \infty}\frac{1}{n}\sum_{\ell=1}^{k_n}{\rm Var}(X_{\ell p})\le C \lim_{n\rightarrow \infty}\frac{k_n}{n}=\frac{C}{p}.$$

So
$$\limsup_{n\rightarrow \infty}\frac{1}{b_n^2}\log{\mathbb P}\left(\left|\sum_{\ell= 1}^{k_n}X_{\ell p}\right| >\delta b_n\sqrt{n}\right)\le -\frac{p}{2}\frac{\delta^2}{C}.$$

Letting $p$ goes to infinty, we get (\ref{N1}).

Now we have to  prove that
\begin{equation}\label{N2}
\limsup_{n\rightarrow \infty}\frac{1}{b_n^2}\log{\mathbb P}\left(\left|\sum_{\ell= k_np+1}^{n}X_{\ell }\right| >\delta b_n\sqrt{n}\right)=-\infty.
\end{equation}

If $r_n=0$ there is no thing to prove, so we suppose that $r_n>0$.
\begin{eqnarray*}
{\mathbb P}\left(\left|\sum_{\ell= k_np+1}^{n}X_{\ell}\right| >\delta b_n\sqrt{n}\right)&\le& \sum_{\ell =k_np+1}^n\mathbb P\left(|X_{\ell}|>\frac{\delta b_n\sqrt{n}}{r_n}\right)\\
&\le& r_n \max_{\ell =k_np+1}^n{\mathbb P}\left(|X_{\ell}|>\delta \frac{b_n\sqrt{n}}{r_n}\right)\\
&\le& n  \max_{\ell =1}^n{\mathbb P}\left(|X_{\ell}|>\delta \frac{b_n\sqrt{n}}{p}\right).
\end{eqnarray*}

Taking the $\log$ and normalizing by $1/b_n^2$,  we obtain (\ref{N2}).
\vspace{5pt}

Now, we have to prove that
\begin{equation}\label{sim}
\limsup_{n\rightarrow \infty}\frac{1}{b_n^2}\log\left[k_n\max_{\ell = 1}^{k_n}\mathbb P\left(|Y_{\ell}|>b_n \sqrt{n}\right)\right]=-\infty.
\end{equation}

We have
\begin{eqnarray*}
k_n\max_{\ell = 1}^{k_n}\mathbb P\left(|Y_{\ell}|>b_n \sqrt{n}\right)&\le& k_n \max_{\ell = 1}^{k_n}\sum_{j=(\ell-1)p+1}^{\ell p-1}\mathbb P\left(|X_{j}|>\frac{1}{p}b_n \sqrt{n}\right)\\
&\le&k_np \max_{\ell=1}^{k_n}\max_{j=(\ell-1)p+1}^{\ell p-1} \mathbb P\left(|X_{j}|>\frac{1}{p}b_n \sqrt{n}\right)\\
&\le&n  \max_{\ell=1}^{n}\mathbb P\left(|X_{j}|>\frac{1}{p}b_n \sqrt{n}\right).
\end{eqnarray*}

Taking the $\log$ and normalizing by $1/b_n^2$ and letting $n$ goes to infinity, we obtain (\ref{sim}).

So we deduce that the sequence $\displaystyle \left(\frac{1}{b_n\sqrt{n}}\sum_{\ell=1}^{k_n}Y_{\ell}\right)_{n\ge 1}$ satisfies the LDP with speed $b_n^2$ and rate function $I(x)=x^2/2\sigma_Y^2$, where
\begin{equation}
\sigma_Y^2:=\lim_{n\rightarrow \infty} \frac{1}{n}{\rm Var}\left(\sum_{\ell =1}^{k_n}Y_{\ell}\right).
\end{equation}

Now we have to prove that $\sigma^2=\sigma_Y^2$. For that we have
\begin{eqnarray*}
\Delta_n:={\rm Var}S_n-{\rm Var}\left(\sum_{\ell=1}^{k_n}Y_{\ell}\right)&=&\sum_{\ell=1}^{k_n}{\mathbb E}(X_{\ell p}^2)+2\sum_{\ell =1}^{k_n}\left(\mathbb E X_{\ell p}X_{\ell p+1}+\mathbb E X_{\ell p} X_{\ell p -1}\right)\\
&+&\sum_{\ell=pk_n+1}^{pk_n+r_n}\mathbb E(X_{\ell}^2+X_{\ell}X_{\ell+1}+X_{\ell}X_{\ell-1}).
\end{eqnarray*}

Now since $\sup_{1\le \ell \le n}\mathbb E X_{\ell}^2<\infty,$ we have that $\Delta_n/n\le (k_n C+4 Ck_n+3r_nC)/n$. So $\lim_{n\rightarrow \infty}\Delta_n/n=C/p$, which goes to 0 when $p$ goes to infinity.$\Box$
\vspace{10pt}

\nocite{*}

\bibliographystyle{acm}
\bibliography{bibliosoumismdpF}

\nocite{*}

\vspace{10pt}

\end{document}